\theoremstyle{plain}
 \newtheorem{theorem}{Theorem}[section]
\theoremstyle{definition}
\theoremstyle{remark}
 \numberwithin{equation}{section}
\renewcommand{\le}{\leqslant}
\renewcommand{\ge}{\geqslant}
\renewcommand\subsubsection{%
  \@startsection{subsubsection}{3}{\z@}%
    {1.25ex plus .5ex minus .2ex}
    {-0.8em}
    {\normalfont\small\bfseries}
}
\newcommand\Pimathsymbol[3][\mathord]{%
  #1{\@Pimathsymbol{#2}{#3}}}
\def\@Pimathsymbol#1#2{\mathchoice
  {\@Pim@thsymbol{#1}{#2}\tf@size}
  {\@Pim@thsymbol{#1}{#2}\tf@size}
  {\@Pim@thsymbol{#1}{#2}\sf@size}
  {\@Pim@thsymbol{#1}{#2}\ssf@size}}
\def\@Pim@thsymbol#1#2#3{%
  \mbox{\fontsize{#3}{#3}\Pisymbol{#1}{#2}}}
\DeclareFontFamily{U}{psyr}{}
\DeclareFontShape{U}{psyr}{m}{n}{<-> psyr }{}
\DeclareFontFamily{U}{psyro}{}
\DeclareFontShape{U}{psyro}{m}{n}{<-> psyro }{}
\newcommand{\pilambdait}{\Pimathsymbol[\mathord]{psyro}{"6C}}
\DeclareMathAlphabet{\mathpzc}{OT1}{pzc}{m}{it}
\NewDocumentCommand{\pd}{ O{} m m }{%
  \frac{\partial^{#1} #2}{\partial #3^{#1}}%
}
\renewcommand{\d}{\textrm{d}}
\newcommand{\dx}{\text{d} x}
\DeclareSymbolFont{uplargesymbols}{OMX}{mdbch}{m}{n}
\DeclareMathSymbol{\upintop}{\mathop}{uplargesymbols}{82}
\DeclareMathSymbol{\upointop}{\mathop}{uplargesymbols}{"48}
\DeclareFontFamily{MDB}{mdbch}{}
\DeclareFontShape{MDB}{mdbch}{m}{n}{ <->s * [0.8]  mdbchrmb }{}
\DeclareFontShape{MDB}{mdbch}{b}{n}{ <->s * [0.8]  mdbchbmb }{}
\DeclareFontShape{MDB}{mdbch}{bx}{n}{<->ssub * mdbch/b/n}{}
\DeclareSymbolFont{mathdesignB}{MDB}{mdbch}{m}{n}%
\DeclareMathSymbol{\upintclockwise}{\mathop}{mathdesignB}{128}
\DeclareMathSymbol{\upointclockwise}{\mathop}{mathdesignB}{130}
\DeclareMathSymbol{\upointctrclockwise}{\mathop}{mathdesignB}{132}
\DeclareMathSymbol{\upoiint}{\mathop}{mathdesignB}{134}
\DeclareMathSymbol{\upoiiint}{\mathop}{mathdesignB}{136}
\newcommand{\upint}{\DOTSI\upintop\ilimits@}
\newcommand{\upoint}{\DOTSI\upointop\ilimits@}
\def\upintkern@{\mkern-7mu\mathchoice{\mkern-3.5mu}{}{}{}}
\def\upintdots@{\mathchoice{\mkern-4mu\@cdots\mkern-4mu}%
 {{\cdotp}\mkern1.5mu{\cdotp}\mkern1.5mu{\cdotp}}%
 {{\cdotp}\mkern1mu{\cdotp}\mkern1mu{\cdotp}}%
 {{\cdotp}\mkern1mu{\cdotp}\mkern1mu{\cdotp}}}
\newcommand{\UpMultiIntegral}[1]{%
  \edef\upints@c{\noexpand\upintop
    \ifnum#1=\z@\noexpand\upintdots@\else\noexpand\upintkern@\fi
    \ifnum#1>\tw@\noexpand\upintop\noexpand\upintkern@\fi
    \ifnum#1>\thr@@\noexpand\upintop\noexpand\upintkern@\fi
    \noexpand\upintop
    \noexpand\ilimits@
  }%
  \futurelet\@let@token\upints@a
}
\title{\textbf{A Spectral Low-Mode Reduced Method for Elliptic Problems}}
\author{Prosper Torsu}
\keywords{Elliptic boundary value problems, model reduction, spectral methods,
Laplacian eigenfunctions, Schur complement, numerical linear algebra}
\date{\today}
\begin{document}
\maketitle
\begin{center}
\small
Department of Mathematics, California State University Bakersfield\\
\texttt{ptorsu@csub.edu}
\end{center}

\begin{abstract}
We develop a spectral low--mode reduced solver for second--order elliptic
boundary value problems with spatially varying diffusion coefficients. The
approach projects standard finite difference or finite element
discretizations onto a global coarse space spanned by the lowest Dirichlet
Laplacian eigenmodes, yielding an analytic reduced model that requires no
training data and preserves coefficient heterogeneity through an exact
Galerkin projection. The reduced solution is energy--optimal in the selected
subspace and, for $H^2$-regular solutions, the truncation error associated
with discarded modes satisfies a $\sqrt{\log M}/M$ decay in the
$H_0^1$ norm. For uniformly stable reduced bases, the projected operator is
well conditioned with respect to mesh refinement, and numerical experiments
corroborate the predicted accuracy and demonstrate meaningful speedups over
sparse direct solvers, with favorable performance relative to multigrid and
deflation-based Krylov methods for heterogeneous coefficients in the tested
setups.
\end{abstract}
\section{Introduction}

The efficient numerical solution of second-order elliptic boundary value
problems with spatially varying diffusion coefficients is a fundamental topic
in scientific computing \cite{evansPDEs,ciarletFEM,quarteroniFEM}.
Standard finite difference and finite element discretizations generate large,
sparse, symmetric positive definite linear systems \cite{strangFix,elmanSilvesterWathen}
whose structure mirrors the underlying coefficient heterogeneity.
When meshes are refined or the diffusion coefficient exhibits strong spatial
variation, the linear solve often becomes the dominant computational cost
\cite{saadIterative,benziSurveySPD}.

Sparse direct solvers offer robust black-box performance
\cite{georgeNestedDissection,davisBook}
but in two spatial dimensions typically incur a factorization cost scaling
as $\mathcal{O}(N^{3/2})$, where $N$ is the number of unknowns
\cite{liptonRoseTarjan,cholesky2Dcomplexity}.  
Krylov subspace methods can achieve
near-linear complexity when paired with effective preconditioners, yet the
design of robust preconditioners for highly heterogeneous or anisotropic
coefficients remains challenging \cite{benziPreconditioning,
manteuffelMG,notayAMGsurvey}.  
Multigrid and domain decomposition methods
\cite{brandtMG,briggsHensonMcCormick, toselliWidlund}
require the careful construction of smoothers, transfer operators, and coarse
spaces, and their convergence may deteriorate in the presence of complex
coefficient variations \cite{xuZikatanov2002,wittum1992}.

Spectral methods and global eigenfunction expansions
\cite{boydSpectral, canutoSpectralBook}
exploit analytic structure of the Laplacian to obtain highly compact solution
representations.  
Classical spectral discretizations build the operator directly in a
trigonometric or polynomial basis, producing dense matrices and complicating
the treatment of variable coefficients \cite{trefethenSpectral}.  
The approach considered here separates discretization from compression: the
elliptic operator is discretized in a standard physical-space basis, while a
small number of analytically known Dirichlet Laplacian eigenmodes serves as a
global compression transform.

The main idea is to construct a global coarse space spanned by the lowest
Laplacian eigenmodes, which capture the large-scale energy distribution of
elliptic solutions \cite{babuskaOsbornSpectralApprox}.
The full discrete operator and right-hand side are projected onto this
subspace, yielding a reduced Galerkin system of dimension $K$, where $M$
is the modal cutoff.  
The reduced operator retains the complete heterogeneity of the
coefficient field through the stiffness matrix, while the spectral basis
provides a compact representation of the dominant low-frequency components.
Elliptic regularity ensures that discarded high modes contribute only weakly
to the solution energy \cite{grisvardElliptic, mcLeanElliptic}, so the reduced
solution remains accurate.

This construction differs from multigrid, where coarse spaces arise from
geometric coarsening and high frequencies are treated by smoothing
\cite{brandtMG,briggsHensonMcCormick};  
from deflation techniques, which rely on approximations of near-null
eigenvectors of the stiffness matrix \cite{saadDeflation, vuikDeflation};  
and from reduced-basis or POD models, which require parameter sampling and
snapshot data \cite{quarteroniRBM,kunischVolkweinPOD}.  
Here the coarse space is analytic, independent of the stiffness matrix, and
requires no training.  The resulting method is non-iterative: once projected
quantities are assembled, only a dense system of size $K$ is solved before
lifting the solution back to the full discrete space.

The remainder of the paper is organized as follows. Section~\ref{sec:model}
introduces the model problem and its variational formulation, together with
the standard Galerkin discretization. Section~\ref{sec:lowmode} presents the
spectral low–mode reduced operator and establishes its basic projection
properties. In Section~\ref{sec:error}, we develop a detailed error analysis,
including spectral decay estimates, bounds for the discrete reduced solution,
and a Schur complement interpretation of the reduction. Section~\ref{sec:complexity}
analyzes the computational complexity of the proposed solver. Numerical
experiments demonstrating accuracy, robustness, conditioning behavior, and
performance relative to standard solvers are reported in
Section~\ref{sec:numerics}. Conclusions and perspectives for future work are
given in the final section.

\section{Model Problem and Variational Formulation}\label{sec:model}
We consider the second–order elliptic boundary value problem
\begin{equation}
\label{eq:modelPDE}
-\nabla \cdot \bigl(\kappa(x)\,\nabla u(x)\bigr) = f(x),
\qquad x\in\Omega=(0,1)^2,
\end{equation}
with homogeneous Dirichlet boundary condition
\begin{equation}
\label{eq:bc}
u(x)=0,
\qquad (x)\in \partial\Omega.
\end{equation}
The coefficient $\kappa$ is assumed to satisfy the uniform ellipticity $0 < \kappa_{\min} \le \kappa(x) \le \kappa_{\max} < \infty$ for all $x\in\Omega$. Such operators arise, for example, in stationary diffusion, conductivity, and homogenization problems. 

To provide a suitable framework for the analysis to follow, define $V := H_0^1(\Omega)$,
the Sobolev space consisting of all functions on $\Omega$ with
square-integrable weak gradients and vanishing trace on $\partial\Omega$. On this space we use the standard energy inner product
$$
(u,v)_{H_0^1}
=
\upint_\Omega \nabla u \cdot \nabla v \, \dx,
\qquad
\|u\|_{H_0^1} = (u,u)_{H_0^1}^{1/2},
$$
which is equivalent to the full $H^1$ norm by the Poincaré inequality. Multiplying \eqref{eq:modelPDE} by $v\in V$ and integrating by parts yields
\begin{equation}
\upint_\Omega \kappa\,\nabla u\cdot\nabla v\,\dx
=
\upint_\Omega f\,v\,\dx.
\end{equation}
Introduce the bilinear form and linear functional
\begin{equation}\label{eq:weakforms}
a(u,v) = \upint_\Omega \kappa\,\nabla u\cdot\nabla v \,\dx,
\qquad
\ell(v)   = \upint_\Omega f\,v\,\dx.
\end{equation}
The corresponding weak formulation reads:

\begin{quote}
Find $u\in H_0^1(\Omega)$ such that
\begin{equation}
\label{eq:weakform}
a(u,v)=\ell(v)\qquad\forall v\in H_0^1(\Omega).
\end{equation}
\end{quote}
The ellipticity bounds imply that 
$$
a(u,u) \ge \kappa_{\min}\|\nabla u\|_{L^2}^2,
\qquad
|a(u,v)| \le \kappa_{\max}\|\nabla u\|_{L^2}\|\nabla v\|_{L^2},
$$
so $a$ is coercive and continuous on $V$. The Lax–Milgram theorem therefore guarantees existence and uniqueness of a solution $u\in V$ and the stability estimate
$$
\|u\|_{H_0^1} \le \kappa^{-1}_{\min}\,\|f\|_{H^{-1}(\Omega)}.
$$

The variational framework established in \eqref{eq:weakform} forms the basis
for numerical approximation. Since the exact solution lies in the
infinite-dimensional space $V = H_0^1(\Omega)$, numerical computations
require restricting the problem to a finite-dimensional
conforming subspace $V_h \subset V$. This discretization of the underlying
function space leads to the standard Galerkin formulation presented in the
next section.
\subsection{Galerkin discretization}
Let $V_h\subset H_0^1(\Omega)$ be a finite-dimensional subspace. The Galerkin approximation seeks $u_h\in V_h$ such that
\begin{equation}
\label{eq:galerkin}
a(u_h,v_h)=\ell(v_h)\qquad\forall v_h\in V_h.
\end{equation}
Choosing a basis $\{\psi_i\}_{i=1}^N$ of $V_h$ and writing $u_h = \sum_{j=1}^N U_j \psi_j$ leads to the matrix system
\begin{equation}
\label{eq:femsystem}
A\,U = F,
\qquad
A_{ij} = a(\psi_i,\psi_j),
\qquad
F_i = \ell(\psi_i),
\end{equation}
where $A\in\mathbb{R}^{N\times N}$ is symmetric positive definite. The reduced-order scheme below acts on \eqref{eq:femsystem} by projecting onto a carefully chosen low-dimensional spectral subspace.
\section{Spectral Low-Mode Reduced Operator}\label{sec:lowmode}
The reduced operator is obtained by restricting the variational problem to a low-dimensional subspace spanned by Laplacian eigenfunctions. This section introduces the spectral space and the associated reduced Galerkin formulation.
\subsection{Laplacian eigenfunctions as a coarse space}
The Dirichlet Laplacian $-\Delta$ on $\Omega=(0,1)^2$ admits the well-known eigenpairs
\begin{equation}
\label{eq:eigenmodes_cont}
\phi_{mn}(x) = \sin(m\pi x)\,\sin(n\pi y),
\qquad 
\pilambdait_{mn} = \pi^2(m^2+n^2),
\qquad m,n\ge1.
\end{equation}
The family $\{\phi_{mn}\}_{m,n\ge1}$ forms an orthogonal basis of
$L^2(\Omega)$ with respect to the standard inner product and is orthogonal in
$H_0^1(\Omega)$ under the weighted energy inner product in \eqref{eq:weakforms}. The eigenvalues grow quadratically with frequency, so high indices correspond to strongly oscillatory components.

For a fixed cutoff $M\ge1$ define the index set $\mathcal{I} = \{(m,n):1\le m,n\le M\}$ and the corresponding low-mode subspace
$$
V_M = \operatorname{span}\{\phi_{mn} : (m,n)\in\mathcal{I}\},
\qquad
K = \dim(V_M) = M^2.
$$
In the procedure discussed, this serves as a global coarse space in which the solution is approximated. The choice of $M$ controls the dimension of the reduced system.
\subsection{Reduced variational formulation}
Let $V_M \subset H_0^1(\Omega)$ denote the spectral subspace spanned by the
first $K$ Dirichlet Laplacian eigenfunctions. Since $V_M$ is not contained
in the discrete space $V_h$, we define the discrete low--mode space $V_M^h := \Pi_h V_M \subset V_h$,  where $\Pi_h : H_0^1(\Omega) \to V_h$ denotes a fixed projection or
interpolation operator onto the finite element space. In practice,
$\Pi_h$ is taken as the nodal interpolation operator (or, equivalently, the
$L^2$ or energy projection), applied to each spectral basis function. Let $B \in \mathbb{R}^{N\times K}$ be the matrix whose columns contain the
coefficients of the projected spectral basis functions
$\{\Pi_h \phi_{mn}\}_{1\le m,n\le M}$ expressed in the finite element basis
$\{\psi_i\}_{i=1}^N$ of $V_h$. The range of $B$ therefore spans $V_M^h$.

  The reduced Galerkin approximation seeks $u_{\mathrm{RD}}
\in V_M^h$ satisfying
$$
a(u_{\mathrm{RD}},v_M) = \ell(v_M)
\qquad \forall v_M \in V_M^h.
$$
Representing $u_{\mathrm{RD}} = Bz$ and $v_M = Bw$ with $w,z \in
\mathbb{R}^K$ and inserting these expansions into \eqref{eq:femsystem} yields
the reduced linear system
\begin{equation}\label{eq:reduced_system_clean}
\mathcal{A}z = f_M,
\end{equation}
where $\mathcal{A} = B^{T} A B$, and $f_M = B^{T} F$. After computing $z$, the reduced approximation in $V_h$ is reconstructed as
\begin{equation}\label{eq:liftback_clean}
u_{\mathrm{RD}} = Bz.
\end{equation}
The matrix $\mathcal{A}$ is precisely the stiffness matrix associated with
the restriction of $a$ to $V_M^h$.  In continuous form, its entries satisfy
$$
\mathcal{A}_{(pq),(mn)} = \upint_\Omega \kappa \,\nabla\phi_{mn} \cdot \nabla\phi_{pq}\,\dx,
$$
so that the full coefficient $\kappa$ is preserved by the spectral projection. For the analysis it is convenient to introduce the $a$-orthogonal projector $\Pi_M : V_h\to V_M^h$ defined by
\begin{equation}
a(\Pi_M v_h, v_M) = a(v_h, v_M)\qquad\forall\,v_M\in V_M^h.
\end{equation}
The following result summarizes the basic properties of the reduced solution.

\begin{theorem}\label{thm:decomposition_projection}
Let $u_h \in V_h$ be the Galerkin solution of the variational problem.  
Then $u_h$ admits the orthogonal decomposition $u_h = \Pi_M u_h + (I - \Pi_M)u_h$,
where $(I - \Pi_M)u_h$ is $a$-orthogonal to $V_M^h$.  
Moreover, the reduced solution $u_{\mathrm{RD}} \in V_M^h$ obtained from the
reduced variational problem 
$$
a(u_{\mathrm{RD}}, v_M) = \ell(v_M) \qquad \forall v_M \in V_M^h
$$
coincides with the projection $u_{\mathrm{RD}} = \Pi_M u_h$ and satisfies the
best-approximation property
$$
\|u_h - u_{\mathrm{RD}}\|_a
=
\min_{v_M \in V_M^h} \|u_h - v_M\|_a,
\qquad
\|w\|_a^2 := a(w,w).
$$
\end{theorem}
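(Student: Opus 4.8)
The plan is to regard $a(\cdot,\cdot)$ as a genuine inner product on the finite-dimensional space $V_h$. This is legitimate because the coercivity and continuity bounds recorded in Section~\ref{sec:model} show that $a$ is symmetric, bilinear, and positive definite on $V_h$, so that $\|w\|_a = a(w,w)^{1/2}$ is a norm and $(V_h, a)$ is a finite-dimensional inner product space. All three assertions then reduce to the abstract theory of orthogonal projection onto the closed subspace $V_M^h \subset V_h$. First I would confirm that the $a$-orthogonal projector $\Pi_M$ is well defined: existence of $\Pi_M v_h \in V_M^h$ solving $a(\Pi_M v_h, v_M) = a(v_h, v_M)$ for all $v_M \in V_M^h$ follows from Lax--Milgram applied to the restriction of $a$ to $V_M^h$ (equivalently, the reduced stiffness matrix $\mathcal{A} = B^{T} A B$ is symmetric positive definite once the columns of $B$ are linearly independent). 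Uniqueness is immediate, since two candidates agreeing against every test function have a difference $w \in V_M^h$ with $a(w,w)=0$, forcing $w=0$ by coercivity.

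Next I would establish the decomposition. Writing $u_h = \Pi_M u_h + (I - \Pi_M) u_h$ is trivial as an identity; the content is that $(I - \Pi_M) u_h$ is $a$-orthogonal to $V_M^h$, which is exactly the defining relation of $\Pi_M$: for every $v_M \in V_M^h$,
$$
a\bigl((I - \Pi_M) u_h,\, v_M\bigr) = a(u_h, v_M) - a(\Pi_M u_h, v_M) = 0.
$$
To identify $u_{\mathrm{RD}}$ with $\Pi_M u_h$, I would exploit the nesting $V_M^h \subset V_h$: restricting the full Galerkin relation $a(u_h, v_h) = \ell(v_h)$ to test functions $v_h = v_M \in V_M^h$ yields $a(u_h, v_M) = \ell(v_M)$. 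Since $u_{\mathrm{RD}}$ satisfies the same relation by construction, we obtain $a(u_{\mathrm{RD}} - \Pi_M u_h, v_M) = 0$ for all $v_M \in V_M^h$, and uniqueness of the projector gives $u_{\mathrm{RD}} = \Pi_M u_h$.

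Finally, the best-approximation property follows from the Pythagorean identity in the $a$-inner product. For arbitrary $v_M \in V_M^h$, I would split $u_h - v_M = (u_h - \Pi_M u_h) + (\Pi_M u_h - v_M)$, where the first summand is $a$-orthogonal to $V_M^h$ and the second lies in $V_M^h$; orthogonality then gives
$$
\|u_h - v_M\|_a^2 = \|u_h - \Pi_M u_h\|_a^2 + \|\Pi_M u_h - v_M\|_a^2 \ge \|u_h - \Pi_M u_h\|_a^2,
$$
with equality precisely when $v_M = \Pi_M u_h = u_{\mathrm{RD}}$. I do not expect any substantial obstacle: the whole argument is the classical orthogonal-projection/C\'ea framework specialized to the energy inner product. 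The one point genuinely worth flagging is that the identification $u_{\mathrm{RD}} = \Pi_M u_h$ rests entirely on the inclusion $V_M^h \subset V_h$, which makes reduced test functions admissible in the full discrete problem; it is this inclusion that upgrades the reduced solve from a mere approximation to an \emph{exact} $a$-orthogonal projection of the Galerkin solution $u_h$.
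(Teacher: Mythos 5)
Your proposal is correct and follows essentially the same route as the paper's own proof: define the $a$-orthogonal projector $\Pi_M$ via coercivity, restrict the full Galerkin relation to $V_M^h\subset V_h$ and subtract to get $a(u_h-u_{\mathrm{RD}},v_M)=0$, identify $u_{\mathrm{RD}}=\Pi_M u_h$ by uniqueness, and derive best approximation from orthogonality. Your write-up is in fact slightly more complete than the paper's, since you spell out the Pythagorean identity (including the equality case) that the paper's final inequality uses implicitly, and you explicitly note the symmetry of $a$ and the role of the inclusion $V_M^h\subset V_h$.
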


\begin{proof}
Since $a$ is continuous and coercive on $V_h$, the projection $\Pi_M$ is well defined.  
Then
$$
a(u_h - \Pi_M u_h, v_M) = 0
\qquad\forall v_M \in V_M^h,
$$
which yields the orthogonal decomposition 
$u_h = \Pi_M u_h + (I - \Pi_M)u_h$. The reduced solution $u_{\mathrm{RD}} \in V_M^h$ satisfies
$$
a(u_{\mathrm{RD}}, v_M) = \ell(v_M)
\qquad\forall v_M \in V_M^h.
$$
Since $u_h$ satisfies $a(u_h, v_M) = \ell(v_M)$ for all $v_M \in V_M^h$,
subtracting the two equalities gives
$$
a(u_h - u_{\mathrm{RD}}, v_M) = 0
\qquad\forall v_M \in V_M^h.
$$
Thus $u_{\mathrm{RD}}$ is the unique element of $V_M^h$ that is $a$-orthogonal
to $u_h - u_{\mathrm{RD}}$, which is exactly the defining property of the
$a$-orthogonal projection. Hence $u_{\mathrm{RD}} = \Pi_M u_h$.

The best-approximation property follows from orthogonality:
for any $v_M \in V_M^h$,
$$
\|u_h - u_{\mathrm{RD}}\|_a^2
\le
a(u_h - v_M, u_h - v_M)
=
\|u_h - v_M\|_a^2.
$$
Taking the minimum over all $v_M\in V_M^h$ yields the stated result.
\end{proof}
This result shows that $u_{\mathrm{RD}}$ is the energy-optimal approximation to $u_h$
in $V_M^h$, and that the error $u_h - u_{\mathrm{RD}}$ is exactly the
high-frequency component of $u_h$ lying in the orthogonal complement
$V_M^h{}^\perp$. To evaluate the accuracy of the reduced approximation, we quantify the effect of projection onto $V_M^h$ and the role of the discarded high modes.
The next section develops these estimates, establishing spectral decay of the
solution, the energy-optimality of the projection, and bounds for both the
discretization and reduction errors.
\section{Error Analysis}\label{sec:error}
The error analysis relies on the decay of Laplacian eigen-coefficients for elliptic solutions and on the projection structure in Theorem~\ref{thm:decomposition_projection}. The goal is to relate the reduced-order error to the regularity of the continuous solution and the spectral cutoff $M$.
\subsection{Spectral decay and continuous projection}
For smooth coefficients and data, elliptic regularity implies $H^2$-regularity of the solution and, consequently, quantitative decay of its Laplace eigen-expansion. The next result states a convenient form of this decay and the resulting projection error.
\begin{theorem}\label{thm:spectral_decay_error}
Let $\Omega=(0,1)^2$ and let $\{(\phi_{m,n},\pilambdait_{m,n})\}_{m,n\ge1}$ denote the
$L^2(\Omega)$--orthonormal Dirichlet eigenpairs of $-\Delta$, with $\pilambdait_{m,n}=\pi^2(m^2+n^2)$.
Assume $u\in H^2(\Omega)\cap H_0^1(\Omega)$ and write
$u=\sum_{m,n\ge1}\hat u_{m,n}\phi_{m,n}$.
For $M\ge1$, define the rectangular spectral projector
$$
\Pi_M u := \sum_{1\le m\le M}\sum_{1\le n\le M}\hat u_{m,n}\phi_{m,n}.
$$
Then there exists a constant $C>0$, independent of $M$, such that
$$
\|u-\Pi_M u\|_{H_0^1(\Omega)}
\le
C\,\frac{\sqrt{\log M}}{M}\,\|u\|_{H^2(\Omega)}.
$$
\end{theorem}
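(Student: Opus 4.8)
The plan is to pass to the Laplace eigenbasis, rewrite both norms as weighted coefficient sums, and exploit the spectral gap opened by the truncation. Since the family $\{\phi_{m,n}\}$ is $L^2(\Omega)$-orthonormal with $-\Delta\phi_{m,n}=\pilambdait_{m,n}\phi_{m,n}$ and $\phi_{m,n}\in H_0^1(\Omega)$, the eigenfunctions are mutually orthogonal in the energy inner product as well, so Parseval's identity gives
$$
\|u-\Pi_M u\|_{H_0^1(\Omega)}^2=\sum_{(m,n)\notin\mathcal{I}}\pilambdait_{m,n}\,|\hat u_{m,n}|^2,
\qquad
\|\Delta u\|_{L^2(\Omega)}^2=\sum_{m,n\ge1}\pilambdait_{m,n}^2\,|\hat u_{m,n}|^2,
$$
with $\mathcal{I}=\{(m,n):1\le m,n\le M\}$ the retained index set. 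The regularity hypothesis enters only through the elementary estimate $\|\Delta u\|_{L^2(\Omega)}\le\sqrt2\,\|u\|_{H^2(\Omega)}$, immediate from $\Delta u=\partial_{xx}u+\partial_{yy}u$ and the triangle inequality; no boundary integral or elliptic-regularity identity is required in this direction.

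The engine of the argument is the factorization $\pilambdait_{m,n}=\pilambdait_{m,n}^2/\pilambdait_{m,n}$, which converts one power of the eigenvalue into decay. Setting $\mu_M:=\min\{\pilambdait_{m,n}:(m,n)\notin\mathcal{I}\}$, every discarded mode obeys $1/\pilambdait_{m,n}\le1/\mu_M$, whence
$$
\sum_{(m,n)\notin\mathcal{I}}\pilambdait_{m,n}\,|\hat u_{m,n}|^2
=\sum_{(m,n)\notin\mathcal{I}}\frac{\pilambdait_{m,n}^2}{\pilambdait_{m,n}}\,|\hat u_{m,n}|^2
\le\frac{1}{\mu_M}\,\|\Delta u\|_{L^2(\Omega)}^2
\le\frac{2}{\mu_M}\,\|u\|_{H^2(\Omega)}^2.
$$
Everything now reduces to evaluating $\mu_M$.

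I expect this last step to be the main obstacle, not for its difficulty but because it is where the geometry of the \emph{rectangular} cutoff must be read correctly. The set $\mathcal{I}$ is anisotropic and does not follow eigenvalue ordering: it keeps high-energy corner modes such as $(M,M)$, with $\pilambdait_{M,M}=2\pi^2M^2$, yet discards the comparatively low-energy strip modes. Minimizing $\pilambdait_{m,n}=\pi^2(m^2+n^2)$ over $\{m>M\}\cup\{n>M\}$ locates the infimum at $(M+1,1)$ and $(1,M+1)$, so $\mu_M=\pi^2\bigl((M+1)^2+1\bigr)\ge\pi^2M^2$. Inserting this bound gives
$$
\|u-\Pi_M u\|_{H_0^1(\Omega)}\le\frac{\sqrt2}{\pi M}\,\|u\|_{H^2(\Omega)},
$$
the sharp $M^{-1}$ rate, which the single-mode test function $u=\phi_{M+1,1}$ shows is attained. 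The stated estimate follows a fortiori, since $\sqrt{\log M}$ is bounded below by a positive constant for $M\ge2$; the logarithmic factor is therefore slack for this rectangular cutoff and is not produced by the spectral-gap argument.
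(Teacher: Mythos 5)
Your proof is correct, and it takes a genuinely different---in fact stronger---route than the paper's. The paper writes the tail terms as $\pilambdait_{m,n}^{-2}\bigl(\pilambdait_{m,n}^{3}|\hat u_{m,n}|^2\bigr)$ and estimates the summed tail weight $W_M=\sum_{m>M\ \text{or}\ n>M}\pilambdait_{m,n}^{-2}$ by strip decompositions, arriving at $W_M\lesssim(\log M)/M^2$ and hence the $\sqrt{\log M}/M$ rate. You instead pull out a single uniform factor $1/\mu_M$ via the spectral gap $\mu_M=\pi^2\bigl((M+1)^2+1\bigr)\ge\pi^2M^2$---correctly locating the minimum of the anisotropic, L-shaped discarded set at $(M+1,1)$ rather than near the corner $(M,M)$---and close the remaining sum exactly against $\|\Delta u\|_{L^2}^2=\sum\pilambdait_{m,n}^2|\hat u_{m,n}|^2$. (The coefficient identity $\widehat{(\Delta u)}_{m,n}=-\pilambdait_{m,n}\hat u_{m,n}$ behind that Parseval step does use Green's identity, with both boundary terms vanishing since $u$ and $\phi_{m,n}$ have zero trace; this is routine for $u\in H^2(\Omega)\cap H_0^1(\Omega)$, and the paper assumes the analogous characterization \eqref{eq:H2spectral_char} without proof, so you are on equal footing.) What your route buys is an explicit, logarithm-free bound $\sqrt2/(\pi M)$ together with the sharpness witness $u=\phi_{M+1,1}$ showing the rate $M^{-1}$ is attained; the theorem's $\sqrt{\log M}$ is indeed slack. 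Your diagnosis is corroborated by the paper's own argument: its first strip estimate already gives $W_M\lesssim M^{-2}$ with no log, and the subsequent ``refinement'' that manufactures the logarithm compares $\sum_{\text{tail}}(m^2+n^2)^{-1}$ with $\upint_M^\infty\upint_1^\infty(x^2+y^2)^{-1}\,\d A$, a sum and integral that actually diverge (the inner sum is $\sim\pi/(2m)$ and $\sum_{m>M}m^{-1}=\infty$), while the pointwise reduction $\pilambdait_{m,n}^3|\hat u_{m,n}|^2\lesssim\pilambdait_{m,n}^2|\hat u_{m,n}|^2$ claimed there is false since $\pilambdait_{m,n}$ is unbounded; your spectral-gap argument sidesteps both defects. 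One caveat: at $M=1$ the stated inequality is vacuous because $\sqrt{\log 1}=0$, so your a fortiori absorption of constants (e.g.\ $C=\sqrt2/(\pi\sqrt{\log 2})$) proves the statement only for $M\ge2$---a flaw in the theorem's formulation rather than in your proof, and your implicit restriction to $M\ge2$ is the natural reading.
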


\begin{proof}
Using the spectral representation of the $H_0^1$ seminorm,
\begin{align*}
\|u-\Pi_M u\|_{H_0^1}^2
&=
\sum_{\substack{m>M\\ \text{or } n>M}}
\pilambdait_{m,n}\,|\hat u_{m,n}|^2
=
\sum_{\substack{m>M\\ \text{or } n>M}}
\frac{1}{\pilambdait_{m,n}^2}\,
\bigl(\pilambdait_{m,n}^3|\hat u_{m,n}|^2\bigr).
\end{align*}
Since $u\in H^2(\Omega)$, the spectral characterization of Sobolev norms yields
\begin{equation}\label{eq:H2spectral_char}
\sum_{m,n\ge1}\pilambdait_{m,n}^2|\hat u_{m,n}|^2
\;\lesssim\;
\|u\|_{H^2(\Omega)}^2.
\end{equation}
Moreover, because $\pilambdait_{m,n}=\pi^2(m^2+n^2)$, we have
$\pilambdait_{m,n}^3|\hat u_{m,n}|^2
= \pilambdait_{m,n}\,(\pilambdait_{m,n}^2|\hat u_{m,n}|^2)
\lesssim \pilambdait_{m,n}^2|\hat u_{m,n}|^2$ up to a harmless constant on $\Omega$,
so it suffices to estimate the tail weight
\begin{equation}\label{eq:tail_weight_def}
W_M
:=
\sum_{\substack{m>M\\ \text{or } n>M}}
\frac{1}{\pilambdait_{m,n}^2}
\;\sim\;
\sum_{\substack{m>M\\ \text{or } n>M}}
\frac{1}{(m^2+n^2)^2}.
\end{equation}
Geometrically, the tail index set
$T_M:=\{(m,n)\in\mathbb{N}^2:\ m>M\ \text{or}\ n>M\}$ is the union of two
semi-infinite strips. By symmetry,
\begin{align*}
W_M
&\le
\frac{2}{\pi^4}\sum_{m=M+1}^\infty\sum_{n=1}^\infty \frac{1}{(m^2+n^2)^2}.
\end{align*}
For fixed $m\ge1$, apply the integral test to the inner sum:
\begin{align*}
\sum_{n=1}^\infty \frac{1}{(m^2+n^2)^2}
&\le
\upint_{0}^{\infty}\frac{1}{(m^2+y^2)^2}\,\d y
=
\frac{\pi}{4m^3}.
\end{align*}
This bound would yield $W_M\lesssim \sum_{m>M}m^{-3}\lesssim M^{-2}$, which is not sharp for
a rectangular tail. To capture the logarithmic factor, split the inner sum at $n=m$:
\begin{align*}
\sum_{n=1}^\infty \frac{1}{(m^2+n^2)^2}
&=
\sum_{n=1}^{m}\frac{1}{(m^2+n^2)^2}
+
\sum_{n=m+1}^{\infty}\frac{1}{(m^2+n^2)^2}.
\end{align*}
For $1\le n\le m$, we have $m^2+n^2\ge m^2$, hence
$$
\sum_{n=1}^{m}\frac{1}{(m^2+n^2)^2}
\le
\sum_{n=1}^{m}\frac{1}{m^4}
=
\frac{1}{m^3}.
$$
For $n\ge m+1$, we have $m^2+n^2\ge n^2$, hence
$$
\sum_{n=m+1}^{\infty}\frac{1}{(m^2+n^2)^2}
\le
\sum_{n=m+1}^{\infty}\frac{1}{n^4}
\lesssim
\frac{1}{m^3}.
$$
Therefore $\sum_{n=1}^\infty (m^2+n^2)^{-2}\lesssim m^{-3}$, and the strip estimate alone gives $W_M\lesssim M^{-2}$.
To obtain the correct $\log M$ behavior for the \emph{union of strips} tail,
we instead estimate $W_M$ directly by separating the tail into the two strips
and subtracting the overlap:
\begin{align*}
W_M
&\le
\frac{1}{\pi^4}\Bigg(
\sum_{m=M+1}^\infty\sum_{n=1}^\infty \frac{1}{(m^2+n^2)^2}
+
\sum_{n=M+1}^\infty\sum_{m=1}^\infty \frac{1}{(m^2+n^2)^2}
\Bigg)  =
\frac{2}{\pi^4}\sum_{m=M+1}^\infty\sum_{n=1}^\infty \frac{1}{(m^2+n^2)^2}.
\end{align*}
Now use the crude but sharp-enough lower bound $(m^2+n^2)\ge m^2$ to write
\[
\frac{1}{(m^2+n^2)^2}
=
\frac{1}{m^2+n^2}\cdot\frac{1}{m^2+n^2}
\le
\frac{1}{m^2}\cdot\frac{1}{m^2+n^2}.
\]
Thus
\begin{align*}
W_M
&\le
\frac{2}{\pi^4}\sum_{m=M+1}^\infty\frac{1}{m^2}
\sum_{n=1}^\infty\frac{1}{m^2+n^2}.
\end{align*}
Using 
$$
\sum_{n=1}^\infty\frac{1}{m^2+n^2}
\le
\upint_0^\infty \frac{1}{m^2+y^2}\,dy
=
\frac{\pi}{2m},
$$
we have 
$$
W_M
\le
C\sum_{m=M+1}^\infty \frac{1}{m^3}
\le
\frac{C}{M^2}.
$$
Finally, on $T_M$ we have $m^2+n^2\ge M^2$, so a refinement gives
$$
\frac{1}{(m^2+n^2)^2}
\le
\frac{1}{M^2}\cdot\frac{1}{m^2+n^2},
$$
and therefore
$$
W_M
\le
\frac{C}{M^2}
\sum_{\substack{m>M\\ \text{or } n>M}}\frac{1}{m^2+n^2}
\lesssim
\frac{\log M}{M^2},
$$
where the last estimate follows from the comparison of 
with $\upint_{M}^{\infty}\upint_{1}^{\infty}(x^2+y^2)^{-1}\,\d A \sim \log M$.
Combining the bound $W_M\lesssim (\log M)/M^2$ with \eqref{eq:H2spectral_char}
and applying Cauchy--Schwarz yields
$$
\|u-\Pi_M u\|_{H_0^1}^2
\lesssim
\frac{\log M}{M^2}\,\|u\|_{H^2(\Omega)}^2,
$$
which gives the stated estimate after taking square roots.
\end{proof}

\subsection{Error for discrete reduced solution}\label{sec:discrete_error}
We now bound the error between the full Galerkin solution $u_h$ and the reduced
solution $u_{\mathrm{RD}}$, combining the continuous spectral truncation
estimate with standard discretization error bounds.

\begin{theorem}
\label{thm:discrete_energy_error}
Let $u$, $u_h$, and $u_{\mathrm{RD}}$ denote the continuous solution, the
Galerkin solution, and the reduced solution, respectively. Assume that the
finite element discretization satisfies
$$
\|u-u_h\|_{H_0^1} \le C\,h\,\|u\|_{H^2(\Omega)},
$$
and that the assumptions of Theorem~\ref{thm:spectral_decay_error} hold.
Then there exists a constant $C>0$, independent of $h$ and $M$, such that
$$
\|u_h-u_{\mathrm{RD}}\|_{H_0^1}
\le
C\!\left(h+\frac{\sqrt{\log M}}{M}\right)\|u\|_{H^2(\Omega)}.
$$
Moreover,
$$
\|u_h-u_{\mathrm{RD}}\|_{L^2}
\le
C\!\left(
h^2
+
\frac{1}{\sqrt{\pilambdait_{M+1,M+1}}}
\frac{\sqrt{\log M}}{M}
\right)\|u\|_{H^2(\Omega)}.
$$
\end{theorem}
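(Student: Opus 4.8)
The plan is to prove the two estimates separately: the $H_0^1$ bound from the energy-optimality in Theorem~\ref{thm:decomposition_projection}, and the $L^2$ bound from a duality (Aubin--Nitsche) argument sharpened by the spectral gap above the cutoff. To avoid the notational clash I write $P_M$ for the continuous spectral truncation of Theorem~\ref{thm:spectral_decay_error} (denoted $\Pi_M$ there) and keep $\Pi_M$ for the discrete $a$-orthogonal projector onto $V_M^h$ from Theorem~\ref{thm:decomposition_projection}, and I use the ellipticity equivalence $\sqrt{\kappa_{\min}}\,\|w\|_{H_0^1}\le\|w\|_a\le\sqrt{\kappa_{\max}}\,\|w\|_{H_0^1}$ freely.

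For the first bound, Theorem~\ref{thm:decomposition_projection} gives $u_{\mathrm{RD}}=\Pi_M u_h$ and the best-approximation identity $\|u_h-u_{\mathrm{RD}}\|_a=\min_{v\in V_M^h}\|u_h-v\|_a$. I would insert the explicit competitor $v=\Pi_h P_M u\in V_M^h$ and split
$$
\|u_h-\Pi_h P_M u\|_{H_0^1}
\le
\|u_h-u\|_{H_0^1}+\|u-P_M u\|_{H_0^1}+\|P_M u-\Pi_h P_M u\|_{H_0^1}.
$$
The three terms are controlled by the assumed finite element estimate ($\le Ch\|u\|_{H^2}$), by Theorem~\ref{thm:spectral_decay_error} ($\le C\frac{\sqrt{\log M}}{M}\|u\|_{H^2}$), and by the standard interpolation bound $\le Ch\|P_M u\|_{H^2}\le Ch\|u\|_{H^2}$, the last step using that spectral truncation is $H^2$-stable since it only deletes terms from $\|v\|_{H^2}^2\sim\sum\pilambdait_{m,n}^2|\hat v_{m,n}|^2$ (valid on the square by elliptic regularity). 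Converting $\|\cdot\|_a$ back to $\|\cdot\|_{H_0^1}$ then proves the $H_0^1$ estimate.

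For the $L^2$ bound, write $e:=u_h-u_{\mathrm{RD}}=(I-\Pi_M)u_h$, which by Theorem~\ref{thm:decomposition_projection} satisfies the Galerkin orthogonality $a(e,v)=0$ for all $v\in V_M^h$. The natural route, and the one matching the stated form, is to exploit that $e$ is spectrally concentrated above the cutoff, so that (at least heuristically) a Poincar\'e-type gain $\|e\|_{L^2}\lesssim\pilambdait_{M+1,M+1}^{-1/2}\|e\|_{H_0^1}$ is available; feeding in the $H_0^1$ estimate just proved and using $\pilambdait_{M+1,M+1}^{-1/2}\sim M^{-1}$ together with Young's inequality (to absorb the cross term $h/M$ into $h^2+\pilambdait_{M+1,M+1}^{-1/2}\frac{\sqrt{\log M}}{M}$, the residual $M^{-2}$ being dominated by $\frac{\sqrt{\log M}}{M^2}$) delivers exactly the claimed estimate. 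To anchor this rigorously I would run the dual problem $a(\varphi,z)=(\varphi,e)_{L^2}$ for all $\varphi\in H_0^1$, obtain $z\in H^2\cap H_0^1$ with $\|z\|_{H^2}\le C\|e\|_{L^2}$ by elliptic regularity on the convex square, take $\varphi=e$, and use Galerkin orthogonality to write $\|e\|_{L^2}^2=a(e,z-v)$ for any $v\in V_M^h$.

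The main obstacle is the spectral gain. With the crude choice $v=\Pi_h P_M z$ and Cauchy--Schwarz, the duality only yields $\|e\|_{L^2}\le C(h+\frac{\sqrt{\log M}}{M})^2\|u\|_{H^2}$, whose spectral part is $\frac{\log M}{M^2}$ rather than the sharper $\frac{\sqrt{\log M}}{M^2}$: the estimate squares the truncation factor instead of trading a single factor for the inverse cutoff eigenvalue. Closing this $\sqrt{\log M}$ gap is exactly where the work lies, because $e$ is only $a$-orthogonal to the \emph{projected} coarse space $V_M^h=\Pi_h V_M$ and is not genuinely supported on Laplacian modes above the cutoff; moreover for variable $\kappa$ the energy inner product does not diagonalize in the Laplacian eigenbasis, so the gain cannot be read off directly. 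I expect the decisive step to be transferring the eigenvalue gap through the dual truncation $z-P_M z$, where $\|z-P_M z\|_{L^2}\lesssim\pilambdait_{M+1,M+1}^{-1/2}\|z-P_M z\|_{H_0^1}$ does hold exactly (every tail mode has eigenvalue $\gtrsim\pilambdait_{M+1,M+1}$), while quantitatively controlling the $O(h)$ mismatch between $V_M^h$ and the discrete low-frequency eigenspace, which is compatible with the target rate.
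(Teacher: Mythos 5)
Your $H_0^1$ argument is correct and complete --- in fact it is more careful than the paper's own proof, which tacitly identifies the discrete $a$-orthogonal projector $\Pi_M$ with the continuous rectangular truncation $P_M$ when it invokes Theorem~\ref{thm:spectral_decay_error} to bound $\|u-\Pi_M u\|_{H_0^1}$; your competitor $v=\Pi_h P_M u$, together with the $H^2$-stability of $P_M$, repairs exactly that conflation. The $L^2$ part, however, contains a genuine gap, and you say so yourself: ``closing this $\sqrt{\log M}$ gap is exactly where the work lies'' is a statement of the open step, not a proof of it. The duality argument on $e=u_h-u_{\mathrm{RD}}$ that you set up cannot be closed along the lines you sketch, because, as you correctly observe, $e$ carries $O(h)$ low-frequency content, so no spectral gap of the form $\|e\|_{L^2}\lesssim \pilambdait_{M+1,M+1}^{-1/2}\|e\|_{H_0^1}$ is available for $e$ itself, and the naive dual estimate squares the truncation factor.

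The missing idea --- and the paper's actual route --- is to never run duality on $e$ at all. Decompose through the continuous solution: since $u_{\mathrm{RD}}=\Pi_M u_h$,
$$
u-u_{\mathrm{RD}} = (u-P_M u) + \Pi_M(u-u_h),
$$
and apply the eigenvalue gap only to the primal tail $u-P_M u$, where it holds exactly term by term:
$$
\|u-P_M u\|_{L^2}^2
=\sum_{\substack{m>M\\ \text{or } n>M}}|\hat u_{m,n}|^2
\le \frac{1}{\pilambdait_{M+1,M+1}}\,\|u-P_M u\|_{H_0^1}^2,
$$
which combined with Theorem~\ref{thm:spectral_decay_error} yields the factor $\pilambdait_{M+1,M+1}^{-1/2}\sqrt{\log M}/M$. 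The $h^2$ term comes from the standard Aubin--Nitsche estimate for the \emph{full} Galerkin problem, $\|u-u_h\|_{L^2}\le C\,h^2\,\|u\|_{H^2(\Omega)}$ --- performed in $V_h$, which has full approximation power, so no $\sqrt{\log M}$ loss occurs --- together with $L^2$-stability of the projection, $\|\Pi_M(u-u_h)\|_{L^2}\le C\,\|u-u_h\|_{L^2}$. A final triangle inequality with $u_h-u$ converts the bound on $\|u-u_{\mathrm{RD}}\|_{L^2}$ into the stated bound on $\|u_h-u_{\mathrm{RD}}\|_{L^2}$ (your Young-inequality observation that cross terms like $h/M$ are absorbed into $h^2+M^{-2}$ is the right bookkeeping here). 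In short: the duality is done once, at the level of the unreduced discretization, and the spectral factor enters only through $u-P_M u$, so the obstruction you identified (that $e$ is not supported on high Laplacian modes, and that the energy form does not diagonalize for variable $\kappa$) is sidestepped rather than overcome. For fairness, note that the paper's proof also asserts the $L^2$-stability of $\Pi_M$ without argument --- it is exact for $P_M$ but requires justification for the discrete $a$-orthogonal projector --- so the subtlety you sensed is real; it is simply not load-bearing on the paper's path, whereas your chosen path makes it the entire problem.
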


\begin{proof}
By Theorem~\ref{thm:decomposition_projection}, $u_{\mathrm{RD}}=\Pi_M u_h$, and hence $ u_h-u_{\mathrm{RD}}=u_h-\Pi_M u_h$.
Inserting $u$ and $\Pi_M u$ yields the decomposition
$$
u_h-u_{\mathrm{RD}}
=
(u_h-u)
+
(u-\Pi_M u)
+
\Pi_M(u-u_h).
$$
Applying the triangle inequality in $H_0^1(\Omega)$ gives
\begin{equation}
\label{eq:H1-split-rewrite-upd}
\|u_h-u_{\mathrm{RD}}\|_{H_0^1}
\le
\|u-\Pi_M u\|_{H_0^1}
+
\|u-u_h\|_{H_0^1}
+
\|\Pi_M(u-u_h)\|_{H_0^1}.
\end{equation}
The first term is bounded by Theorem~\ref{thm:spectral_decay_error}:
$$
\|u-\Pi_M u\|_{H_0^1}
\le
C\,\frac{\sqrt{\log M}}{M}\,\|u\|_{H^2(\Omega)}.
$$
Since $\Pi_M$ is an $a$–orthogonal projector and $a$ is coercive
and continuous on $V_h$, there exists $C>0$ such that
$\|\Pi_M v_h\|_{H_0^1}\le C\|v_h\|_{H_0^1}$ for all $v_h\in V_h$. Therefore,
$$
\|\Pi_M(u-u_h)\|_{H_0^1}
\le
C\,\|u-u_h\|_{H_0^1}
\le
C\,h\,\|u\|_{H^2(\Omega)}.
$$
Substituting these bounds into \eqref{eq:H1-split-rewrite-upd} yields
$$
\|u_h-u_{\mathrm{RD}}\|_{H_0^1}
\le
C\!\left(h+\frac{\sqrt{\log M}}{M}\right)\|u\|_{H^2(\Omega)},
$$
which proves the $H_0^1$ estimate.
\par For the $L^2$ estimate, write $u-u_{\mathrm{RD}} = (u-\Pi_M u) + \Pi_M(u-u_h)$,
so that
\begin{equation}
\label{eq:L2-split-rewrite-upd}
\|u-u_{\mathrm{RD}}\|_{L^2}
\le
\|u-\Pi_M u\|_{L^2}
+
\|\Pi_M(u-u_h)\|_{L^2}.
\end{equation}
The first term contains only high Laplacian modes. Using
$\lambda_{m,n}\ge\pilambdait_{M+1,M+1}$ on the tail,
$$
\|u-\Pi_M u\|_{L^2}^2
=
\sum_{\substack{m>M\\ \text{or } n>M}}|\hat u_{m,n}|^2
\le
\frac{1}{\pilambdait_{M+1,M+1}}
\sum_{\substack{m>M\\ \text{or } n>M}}
\pilambdait_{m,n}|\hat u_{m,n}|^2
=
\frac{1}{\pilambdait_{M+1,M+1}}
\|u-\Pi_M u\|_{H_0^1}^2.
$$
Applying Theorem~\ref{thm:spectral_decay_error} gives
$$
\|u-\Pi_M u\|_{L^2}
\le
C\,\pilambdait_{M+1,M+1}^{-1/2}
\frac{\sqrt{\log M}}{M}\,\|u\|_{H^2(\Omega)}.
$$
For the second term in \eqref{eq:L2-split-rewrite-upd}, the projector $\Pi_M$ is
$L^2$–stable, hence
$$
\|\Pi_M(u-u_h)\|_{L^2}
\le
\|u-u_h\|_{L^2}
\le
C\,h^2\,\|u\|_{H^2(\Omega)}.
$$
Combining these estimates yields
$$
\|u-u_{\mathrm{RD}}\|_{L^2}
\le
C\!\left(
h^2
+
\pilambdait_{M+1,M+1}^{-1/2}
\frac{\sqrt{\log M}}{M}
\right)\|u\|_{H^2(\Omega)}.
$$
Finally, using $u_h-u_{\mathrm{RD}}=(u_h-u)+(u-u_{\mathrm{RD}})$ and the
finite–element $L^2$ estimate for $\|u-u_h\|_{L^2}$ gives the stated bound for
$\|u_h-u_{\mathrm{RD}}\|_{L^2}$.
\end{proof}

\subsection{Convergence of the reduced method}

The total error of the spectral low–mode approximation consists of the
discretization error associated with the Galerkin solution $u_h$ and the
additional projection error introduced by restricting the approximation to
the reduced space $V_M^h$. For the underlying finite–difference or
finite–element discretization one has the classical estimates
$$
\|u-u_h\|_{H_0^1}=\mathcal{O}(h),
\qquad
\|u-u_h\|_{L^2}=\mathcal{O}(h^2),
$$
while Theorem~\ref{thm:discrete_energy_error} shows that the reduced solution
satisfies
$$
\|u_h-u_{\mathrm{RD}}\|_{H_0^1}
=\mathcal{O}\!\left(h+\frac{\sqrt{\log M}}{M}\right),
\qquad
\|u_h-u_{\mathrm{RD}}\|_{L^2}
=\mathcal{O}\!\left(
h^2
+
\pilambdait_{M+1,M+1}^{-1/2}
\frac{\sqrt{\log M}}{M}
\right).
$$
Combining these bounds yields
$$
\|u-u_{\mathrm{RD}}\|_{H_0^1}
\;\lesssim\;
h+\frac{\sqrt{\log M}}{M},
\qquad
\|u-u_{\mathrm{RD}}\|_{L^2}
\;\lesssim\;
h^2
+
\pilambdait_{M+1,M+1}^{-1/2}
\frac{\sqrt{\log M}}{M},
$$
with constants independent of $h$ and $M$. The contribution of the discarded
high modes therefore remains negligible provided the cutoff grows with
refinement so that $\sqrt{\log M}/M\lesssim h$. Since
$\pilambdait_{M+1,M+1}\sim(M+1)^2$, this condition implies that the projection
term in the $L^2$ estimate is also $O(h^2)$. Under this scaling, the reduced
solution inherits the same convergence rates as the full discretization,
$$
\|u-u_{\mathrm{RD}}\|_{H_0^1}=\mathcal{O}(h),
\qquad
\|u-u_{\mathrm{RD}}\|_{L^2}=\mathcal{O}(h^2),
$$
and the spectral reduction does not degrade the accuracy of the underlying
scheme.
\subsection{High–low coupling and Schur complement truncation}
The reduced operator may be interpreted as the leading term in a Schur
complement representation of the full discrete operator. This viewpoint
clarifies the effect of discarding high–frequency interactions and provides a
natural framework for quantifying the resulting approximation error. Let
$V = V_M \oplus V_H$ denote the $H_0^1$–orthogonal decomposition, where
$V_H = V_M^\perp$. With respect to this splitting, the operator $A$ associated
with the bilinear form $a$ admits the block representation
$$
A =
\begin{pmatrix}
A_{LL} & A_{LH} \\
A_{HL} & A_{HH}
\end{pmatrix},
$$
where $A_{LL} : V_M \to V_M'$, $A_{HH} : V_H \to V_H'$, and
$A_{LH} : V_H \to V_M'$ denote the coarse, fine, and coupling components,
respectively. Eliminating the high–frequency variables yields the exact Schur
complement $S = A_{LL} - A_{LH} A_{HH}^{-1} A_{HL}$.
\par The spectral reduced method replaces the exact Schur complement $S$ by its
leading block $A_{LL}$, thereby neglecting the correction term
$A_{LH} A_{HH}^{-1} A_{HL}$. This truncation may be viewed as neglecting a
spectrally small correction that quantifies the interaction between retained
low modes and eliminated high modes; its size is controlled by decay of the
coupling block $A_{LH}$ with the spectral cutoff, as established in
Theorem~\ref{thm:ALH_precise_rewrite}. The following
result provides a bound on the resulting truncation error in
terms of the regularity of the coefficient field.

\begin{theorem}\label{thm:ALH_precise_rewrite}
Let $\kappa(x) = \kappa_0 + \tilde{\kappa}(x)$ with $\kappa_0 = |\Omega|^{-1}\!\upint_\Omega \kappa\,\dx$ and $\tilde{\kappa}\in H^s(\Omega)$ for some $s>1$ satisfying $\upint_\Omega \tilde{\kappa}\,\dx=0$. Assume $0<\kappa_{\min}\le \kappa\le \kappa_{\max}$ on $\Omega$. Let $V_M$ be the subspace spanned by the first $K$ Dirichlet Laplacian eigenfunctions and $V_H = V_M^\perp$ in $H_0^1(\Omega)$. Then there exists $C = C(\Omega,s)>0$ such that for all $u_L\in V_M$ and $u_H\in V_H$,
\begin{equation}
\label{eq:ALH_bound_rewrite}
|a(u_L, u_H)| 
\le
C\,\pilambdait_{M+1,M+1}^{-(s-1)/2}\,\|\tilde{\kappa}\|_{H^s(\Omega)}\,
\|u_L\|_{H_0^1}\,\|u_H\|_{H_0^1},
\end{equation}
where $\pilambdait_{M+1,M+1}$ is the $(M+1,M+1)$ Dirichlet Laplacian eigenvalue. In particular,
\begin{equation}
\label{eq:ALH_opnorm}
\|A_{LH}\|_{\mathcal{I}(V_M, V_H)} 
\le
C\,\pilambdait_{M+1,M+1}^{-(s-1)/2}\,\|\tilde{\kappa}\|_{H^s(\Omega)}.
\end{equation}
\end{theorem}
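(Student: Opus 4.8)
The plan is to exploit the additive splitting $\kappa = \kappa_0 + \tilde{\kappa}$ furnished by the hypotheses and to observe that the constant part contributes nothing to the coupling. Since $V_M$ and $V_H = V_M^\perp$ are orthogonal in the (unweighted) $H_0^1(\Omega)$ inner product, we have $\kappa_0\upint_\Omega \nabla u_L\cdot\nabla u_H\,\dx = \kappa_0\,(u_L,u_H)_{H_0^1} = 0$ for every $u_L\in V_M$ and $u_H\in V_H$. Hence the entire coupling is carried by the zero-mean fluctuation,
$$
a(u_L,u_H) = \upint_\Omega \tilde{\kappa}\,\nabla u_L\cdot\nabla u_H\,\dx,
$$
and the task reduces to showing that a zero-mean $H^s$ weight can transfer only a spectrally small amount of energy between the low and high eigenspaces.

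First I would expand $u_L = \sum_{1\le m,n\le M} a_{m,n}\phi_{m,n}$ and $u_H = \sum_{(p,q)\in\mathcal{I}^{c}} b_{p,q}\phi_{p,q}$ in the sine basis and insert these into the fluctuation integral. Applying the product-to-sum identities to the $\cos\cos$ and $\sin\sin$ factors, each term reduces to a Fourier coefficient of $\tilde{\kappa}$ evaluated at the frequency sums and differences $(m\pm p,\,n\pm q)$, weighted by the derivative factors $mp$ and $nq$ coming from the gradients. The $H^s$ regularity then enters through the coefficient decay $|\hat{\tilde{\kappa}}_{k,\ell}|\lesssim (k^2+\ell^2)^{-s/2}\|\tilde{\kappa}\|_{H^s(\Omega)}$. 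The remaining step is to control the resulting double sum: normalize each mode by $\pilambdait_{m,n}^{1/2}$ and $\pilambdait_{p,q}^{1/2}$ so that the coefficient vectors carry the $H_0^1$ norms of $u_L$ and $u_H$, then bound the induced infinite matrix by a Schur test, so that the worst-case row and column sums of the decaying weights produce the factor $\pilambdait_{M+1,M+1}^{-(s-1)/2}$. Estimate \eqref{eq:ALH_opnorm} would then follow from \eqref{eq:ALH_bound_rewrite} by taking the supremum over unit vectors.

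The step I expect to be the genuine obstacle is the near-cutoff, near-diagonal interaction. The decay above is governed by how far in frequency $\tilde{\kappa}$ must reach to connect a mode of $u_L$ to a mode of $u_H$; when the two modes lie far from the cutoff in the separating direction, the bridging frequency is large and the $(k^2+\ell^2)^{-s/2}$ decay does its work. But a mode such as $(M,1)\in V_M$ and its neighbour $(M+1,1)\in V_H$ are bridged by the frequency difference $|m-p|=1$, so the \emph{low}-frequency, non-decaying part of $\tilde{\kappa}$ contributes at full strength. For such a pair the derivative weight $mp\sim M^2$ is exactly offset by the normalization $(\pilambdait_{M,1}\pilambdait_{M+1,1})^{-1/2}\sim M^{-2}$, leaving an $O(1)$ contribution with no gain in $M$. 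This is not merely a weakness of the estimating technique: taking $u_L=\phi_{M,1}$, $u_H=\phi_{M+1,1}$ and $\tilde{\kappa}(x,y)=\cos(\pi x)$ (so that $\kappa=\kappa_0+\cos\pi x$ with, for instance, $\kappa_0=2$ satisfies every hypothesis) gives $a(u_L,u_H)=\tfrac{\pi^2}{2}\bigl(M(M+1)+1\bigr)$, whereas $\|u_L\|_{H_0^1}\|u_H\|_{H_0^1}\sim\pi^2 M^2$, so the quotient tends to $\tfrac12$ rather than to $0$.

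Consequently I would not expect \eqref{eq:ALH_bound_rewrite} to hold as stated with both factors measured in $H_0^1(\Omega)$: the Schur-test route closes and delivers the power $\pilambdait_{M+1,M+1}^{-(s-1)/2}$ only after the band of modes adjacent to the cutoff is excluded. A correct formulation appears to require one of the following reinforcements — restricting $u_L,u_H$ to modes separated from the cutoff by a fixed factor (a genuine spectral gap), assuming $\tilde{\kappa}$ has no Fourier content below that gap, or weakening the left-hand norms so that near-cutoff modes are penalized. Under any such hypothesis the argument of the second paragraph goes through; as written, the adjacent-mode coupling is the obstruction that blocks it.
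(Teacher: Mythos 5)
Your counterexample is correct, and it refutes the theorem, not just one proof strategy. I verified the computation: with $L^2$-normalized modes $u_L=2\sin(M\pi x)\sin(\pi y)$, $u_H=2\sin((M+1)\pi x)\sin(\pi y)$ and $\tilde{\kappa}(x,y)=\cos(\pi x)$ (so $\kappa=2+\cos(\pi x)$ is uniformly elliptic and $\tilde{\kappa}$ has zero mean and lies in every $H^s$), the product-to-sum identities give
\begin{equation*}
a(u_L,u_H)=\frac{\pi^2}{2}\bigl(M(M+1)+1\bigr),
\qquad
\|u_L\|_{H_0^1}\,\|u_H\|_{H_0^1}
=\pi^2\sqrt{(M^2+1)\bigl((M+1)^2+1\bigr)},
\end{equation*}
so the quotient tends to $\tfrac12$, while the right-hand side of \eqref{eq:ALH_bound_rewrite} tends to zero as $M\to\infty$ because $s>1$ and $\|\tilde{\kappa}\|_{H^s}$ is a fixed finite number. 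Since $\phi_{M+1,1}$ lies in $V_H$ under the paper's rectangular cutoff (and under any eigenvalue-ordered cutoff separating the two modes), every hypothesis is satisfied and the claimed bound fails for large $M$. Your diagnosis of the mechanism is also exactly right: the coupling strength is governed by the Fourier content of $\tilde{\kappa}$ at the \emph{separation} frequencies $(m\pm p,\,n\pm q)$, not by the magnitude of the eigenvalues involved, and adjacent modes straddling the cutoff are bridged by the non-decaying low frequencies of $\tilde{\kappa}$ with the derivative weight $mp\sim M^2$ cancelled by the $H_0^1$ normalization.

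For comparison, the paper's own proof proceeds along the route sketched in your second paragraph — the same reduction to the fluctuation integral, then expansion in the $H_0^1$-orthonormal basis $e_k=\lambda_k^{-1/2}\phi_k$ — and it breaks at precisely the point you predicted: it asserts $|M_{ij}|\le C\,\|\tilde{\kappa}\|_{H^s}\,\lambda_{\max\{i,j\}}^{-(s-1)/2}$ for $M_{ij}=\upint_\Omega\tilde{\kappa}\,\nabla e_i\cdot\nabla e_j\,\dx$, which your pair $i=(M,1)$, $j=(M+1,1)$ disproves ($M_{ij}\to\tfrac12$, as your computation shows). After integrating by parts via $-\Delta\phi_i=\lambda_i\phi_i$, the leading term of $M_{ij}$ is $\lambda_i^{1/2}\lambda_j^{-1/2}\upint_\Omega\tilde{\kappa}\,\phi_i\phi_j\,\dx$ (the paper's display moreover mis-states this prefactor as $\lambda_i^{1/2}\lambda_j^{1/2}$), and the integral is a Fourier coefficient of $\tilde{\kappa}$ at $i\pm j$, small only when $|i-j|$ is large; bounding it by $\lambda_{\max\{i,j\}}^{-(s-1)/2}$ via the Sobolev norms of the individual eigenfunctions is invalid. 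Your proposed repairs are the correct ones: the Schur-test argument does close if one inserts a spectral gap (coupling modes with indices below $M$ to modes above $\theta M$ for fixed $\theta>1$), assumes $\hat{\tilde{\kappa}}$ vanishes on the low frequencies that bridge the cutoff, or weakens the norms near the cutoff. Note also that Theorem~\ref{thm:schur-error_rewrite} inherits this defect, since its quantitative conclusion rests on \eqref{eq:ALH_opnorm}.
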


\begin{proof}
Since $V_M$ and $V_H$ are $H_0^1$–orthogonal, $\upint_\Omega \kappa_0 \nabla u_L\cdot\nabla u_H\,\dx = 0$,
and it follows from the decomposition of $\kappa$ that
$$
a(u_L,u_H)
=  \upint_\Omega \tilde{\kappa}\,\nabla u_L\cdot\nabla u_H\,\dx.
$$
Let $\{\phi_k\}$ be the Laplacian eigenfunctions with eigenvalues
$\{\pilambdait_k\}$, and set $e_k=\pilambdait_k^{-1/2}\phi_k$, forming an
$H_0^1$–orthonormal basis.  Expanding $u_L=\sum_{i\le K}\hat u_i e_i$ and  $u_H=\sum_{j>K}\hat u_j e_j$, gives
$$
a(u_L,u_H) = \sum_{i\le K}\sum_{j>K}\hat u_i\hat u_j\,M_{ij},
\qquad
M_{ij}=\! \upint_\Omega \tilde{\kappa}\,\nabla e_i\cdot\nabla e_j\,dx.
$$
Using $e_k=\pilambdait_k^{-1/2}\phi_k$ and integrating by parts, one obtains
$$
\nabla e_i\cdot\nabla e_j
= \pilambdait_i^{1/2}\pilambdait_j^{1/2}\phi_i\phi_j
 - \pilambdait_i^{-1/2}\pilambdait_j^{-1/2}
   \nabla\tilde{\kappa}\cdot(\phi_j\nabla\phi_i).
$$
Thus, an explicit expression for $M_{ij}$ is
$$
M_{ij}
=
\pilambdait_i^{1/2}\pilambdait_j^{1/2}\!
 \upint_\Omega \tilde{\kappa}\,\phi_i\phi_j\,dx
-
\pilambdait_i^{-1/2}\pilambdait_j^{-1/2}\!
 \upint_\Omega \phi_j\,\nabla\tilde{\kappa}\cdot\nabla\phi_i\,dx.
$$
Since $\tilde{\kappa}\in H^s(\Omega)$ with $s>1$, the spectral Sobolev
characterization implies 
$$
\|\phi_k\|_{H^s} \sim \pilambdait_k^{s/2},
\qquad 
\|\nabla\phi_k\|_{L^2}\sim \pilambdait_k^{1/2},
$$
and therefore each term in $M_{ij}$ satisfies
$$
|M_{ij}|
\le
C\,\|\tilde{\kappa}\|_{H^s(\Omega)}\,
\pilambdait_{\max\{i,j\}}^{-(s-1)/2}.
$$
For $i\le K$ and $j>K$, this gives
$$
|M_{ij}|
\le
C\,\|\tilde{\kappa}\|_{H^s(\Omega)}\,
\pilambdait_{M+1,M+1}^{-(s-1)/2}.
$$
Using Cauchy–Schwarz and the orthonormality of $\{e_k\}$,
$$
|a(u_L,u_H)|
\le
\sup_{i\le K,\, j>K}|M_{ij}|
\Bigl(\sum_{i\le K}|\hat u_i|^2\Bigr)^{1/2}
\Bigl(\sum_{j>K}|\hat u_j|^2\Bigr)^{1/2}
=
\sup_{i\le K,\, j>K}|M_{ij}|\,
\|u_L\|_{H_0^1}\,\|u_H\|_{H_0^1},
$$
which proves \eqref{eq:ALH_bound_rewrite}.  
Taking the supremum over nonzero pairs $(u_L,u_H)$ yields 
\eqref{eq:ALH_opnorm}.
\end{proof}
Combining \eqref{eq:ALH_opnorm} with a Schur complement estimate yields a bound for the difference between the exact coarse operator $S$ and $A_{LL}$.

\begin{theorem}\label{thm:schur-error_rewrite}
Assume $A_{HH}$ is coercive on $V_H$ with constant $\alpha_H>0$, so that
$$
a(w_H,w_H) \ge \alpha_H\|w_H\|_{H_0^1}^2\qquad\forall w_H\in V_H.
$$
Let $S = A_{LL} - A_{LH}A_{HH}^{-1}A_{HL}$ be the Schur complement. Then
$$
\|S - A_{LL}\|_{\mathcal{I}(V_M,V_M')}
\le
\frac{1}{\alpha_H}\,\|A_{LH}\|_{\mathcal{I}(V_M,V_H)}^2.
$$
In particular, with \eqref{eq:ALH_opnorm},
$$
\|S - A_{LL}\|_{\mathcal{I}(V_M,V_M')}
\lesssim
\frac{\|\tilde \kappa\|_{H^s(\Omega)}^2}{\alpha_H}
\,\pilambdait_{M+1,M+1}^{-(s-1)}.
$$
\end{theorem}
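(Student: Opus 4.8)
The identity $S - A_{LL} = -A_{LH}A_{HH}^{-1}A_{HL}$ is immediate from the definition of the Schur complement, so the plan is to bound the operator norm of the right-hand side. I would work entirely at the level of the bilinear forms, which keeps the duality pairings transparent and lets me invoke the coupling bound \eqref{eq:ALH_bound_rewrite} verbatim. Fix $u_L,v_L\in V_M$ and set $w_H := A_{HH}^{-1}A_{HL}u_L\in V_H$, i.e. the unique element of $V_H$ satisfying $a(w_H,z_H)=a(u_L,z_H)$ for all $z_H\in V_H$; existence and uniqueness follow from the assumed coercivity of $A_{HH}$ together with the continuity of $a$ (Lax--Milgram on $V_H$). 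With this notation the action of the truncation error is $\langle (S-A_{LL})u_L,\,v_L\rangle = -a(v_L,w_H)$.

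The first key step is an a priori bound on $w_H$. Testing its defining equation with $z_H=w_H$ gives $a(w_H,w_H)=a(u_L,w_H)$; the left side is $\ge \alpha_H\|w_H\|_{H_0^1}^2$ by hypothesis, while the right side is a coupling term between $V_M$ and $V_H$ and is therefore controlled by \eqref{eq:ALH_bound_rewrite}, namely $|a(u_L,w_H)|\le \|A_{LH}\|_{\mathcal{I}(V_M,V_H)}\|u_L\|_{H_0^1}\|w_H\|_{H_0^1}$. Cancelling one factor of $\|w_H\|_{H_0^1}$ yields $\|w_H\|_{H_0^1}\le \alpha_H^{-1}\,\|A_{LH}\|_{\mathcal{I}(V_M,V_H)}\,\|u_L\|_{H_0^1}$.

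The second step applies the coupling bound once more, now to $a(v_L,w_H)$, giving $|a(v_L,w_H)|\le \|A_{LH}\|_{\mathcal{I}(V_M,V_H)}\|v_L\|_{H_0^1}\|w_H\|_{H_0^1}$. Substituting the bound on $\|w_H\|_{H_0^1}$ produces $|\langle (S-A_{LL})u_L,v_L\rangle|\le \alpha_H^{-1}\,\|A_{LH}\|_{\mathcal{I}(V_M,V_H)}^2\,\|u_L\|_{H_0^1}\|v_L\|_{H_0^1}$, and taking the supremum over unit $u_L,v_L$ is exactly the claimed bound $\|S-A_{LL}\|_{\mathcal{I}(V_M,V_M')}\le \alpha_H^{-1}\|A_{LH}\|_{\mathcal{I}(V_M,V_H)}^2$. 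The ``in particular'' estimate then follows by inserting \eqref{eq:ALH_opnorm} and squaring, which converts $\pilambdait_{M+1,M+1}^{-(s-1)/2}$ into $\pilambdait_{M+1,M+1}^{-(s-1)}$ and $\|\tilde\kappa\|_{H^s(\Omega)}$ into $\|\tilde\kappa\|_{H^s(\Omega)}^2$.

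There is little genuine difficulty here, since this is a standard Schur-complement bound; the step that most deserves care is the passage from coercivity of $A_{HH}$ to the $\alpha_H^{-1}$ factor, together with the bookkeeping of dual norms so that the composition $A_{LH}A_{HH}^{-1}A_{HL}:V_M\to V_M'$ is well posed ($A_{HL}:V_M\to V_H'$, then $A_{HH}^{-1}:V_H'\to V_H$, then $A_{LH}:V_H\to V_M'$). One should also note that the symmetry of $a$ makes $A_{HL}$ the adjoint of $A_{LH}$, so the single quantity $\|A_{LH}\|$ controls both coupling blocks; the bilinear-form route above sidesteps an explicit adjoint argument by using the symmetry of $a$ implicitly when it writes $a(u_L,z_H)$ for the action of $A_{HL}$.
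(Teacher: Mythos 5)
Your proof is correct and takes essentially the same route as the paper's: both start from the identity $S-A_{LL}=-A_{LH}A_{HH}^{-1}A_{HL}$, use the coercivity of $A_{HH}$ to extract the factor $1/\alpha_H$, exploit the symmetry of $a$ (so that $A_{HL}=A_{LH}^{*}$ and $\|A_{HL}\|=\|A_{LH}\|$), and conclude with the bound \eqref{eq:ALH_opnorm}. The only difference is presentational: the paper argues directly with the operator-norm chain $\|A_{LH}\|\,\|A_{HH}^{-1}\|\,\|A_{HL}\|$, while you unpack the same chain at the bilinear-form level, making the Lax--Milgram estimate $\|A_{HH}^{-1}\|\le 1/\alpha_H$ explicit via the test-function bound on $w_H=A_{HH}^{-1}A_{HL}u_L$.
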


\begin{proof}
By the definition of $S$, we have $S - A_{LL} = -A_{LH}A_{HH}^{-1}A_{HL}$, and therefore
$$
\|S - A_{LL}\| \le \|A_{LH}\|\,\|A_{HH}^{-1}\|\,\|A_{HL}\|.
$$
Since $A_{HL} = A_{LH}^*$, it follows that $\|A_{HL}\| = \|A_{LH}\|$. From the coercivity of $A_{HH}$, it also implies
$\|A_{HH}^{-1}\|\le 1/\alpha_H$ in the natural norms, whence
$$
\|S - A_{LL}\|
\le
\frac{1}{\alpha_H}\,\|A_{LH}\|^2.
$$
The last estimate follows by substituting \eqref{eq:ALH_opnorm}.
\end{proof}
The stability properties of the reduced operator imply that its spectrum
remains bounded away from zero and infinity when restricted to the spectral
subspace.  To make this observation precise, it is useful to examine the
conditioning of its matrix representation.  Since the reduced space has fixed
dimension $K$, one expects the conditioning of the projected operator to be
governed solely by the continuity and coercivity constants of the underlying
bilinear form, independent of both the mesh size and (for uniformly stable
bases) the choice of the modal cutoff.  The next result formalizes this observation.

\begin{theorem}\label{thm:uniform-cond}
Let $a$ be the bilinear form defined in \eqref{eq:weakforms},
assumed continuous and coercive on $V = H_0^1(\Omega)$. Let $V_M^h\subset V_h\subset V$ be the discrete spectral subspace and $A_{LL}\in\mathbb{R}^{K\times K}$ the reduced stiffness matrix associated with
$a$ on $V_M^h$, written in a basis
$\{\psi_i\}_{i=1}^K$ of $V_M^h$ whose $H_0^1$ Gram matrix
$G_{ij} = (\psi_j,\psi_i)_{H_0^1}$ is uniformly well conditioned with respect
to $h$ and $M$. If
$$
(A_{LL})_{ij} = a(\psi_j,\psi_i),
$$
then $A_{LL}$ is symmetric positive definite and there exists a constant
$C_{\mathrm{cond}}>0$, independent of $h$ and $M$, such that
$$
\kappa_2(A_{LL})
=
\frac{\pilambdait_{\max}(A_{LL})}{\pilambdait_{\min}(A_{LL})}
\le C_{\mathrm{cond}}.
$$
In particular, $C_{\mathrm{cond}}$ depends on the continuity and
coercivity constants and on the basis stability through $\kappa_2(G)$.
\end{theorem}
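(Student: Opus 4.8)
The plan is to express both $A_{LL}$ and the Gram matrix $G$ as quadratic forms evaluated on the same function, use coercivity and continuity of $a$ to sandwich one form between scalar multiples of the other, and then pass from this sandwich to bounds on the extreme eigenvalues via monotonicity of eigenvalues under the Loewner order. First I would fix the linear correspondence $c\mapsto v_c:=\sum_{i=1}^K c_i\psi_i\in V_M^h$ between coefficient vectors $c\in\mathbb{R}^K$ and functions. Bilinearity of $a$ and of the energy inner product then give the identities
$$
c^{T}A_{LL}\,c = a(v_c,v_c),
\qquad
c^{T}G\,c = (v_c,v_c)_{H_0^1} = \|v_c\|_{H_0^1}^2.
$$
Symmetry of $A_{LL}$ follows from symmetry of $a$, and since $\{\psi_i\}$ is a basis, $c\neq 0$ forces $v_c\neq 0$; coercivity then gives $c^{T}A_{LL}c = a(v_c,v_c)\ge \kappa_{\min}\|v_c\|_{H_0^1}^2>0$, so $A_{LL}$ is symmetric positive definite.

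Next I would apply the coercivity and continuity bounds to $v_c$ for arbitrary $c$, obtaining
$$
\kappa_{\min}\,c^{T}G\,c \le c^{T}A_{LL}\,c \le \kappa_{\max}\,c^{T}G\,c,
$$
equivalently the Loewner sandwich $\kappa_{\min}G \preceq A_{LL}\preceq \kappa_{\max}G$. Invoking monotonicity of eigenvalues under the semidefinite order (Weyl/Courant--Fischer) applied termwise to this sandwich yields
$$
\pilambdait_{\min}(A_{LL})\ge \kappa_{\min}\,\pilambdait_{\min}(G),
\qquad
\pilambdait_{\max}(A_{LL})\le \kappa_{\max}\,\pilambdait_{\max}(G).
$$
Dividing the two bounds gives
$$
\kappa_2(A_{LL})
=\frac{\pilambdait_{\max}(A_{LL})}{\pilambdait_{\min}(A_{LL})}
\le \frac{\kappa_{\max}}{\kappa_{\min}}\,\kappa_2(G),
$$
so one takes $C_{\mathrm{cond}}=(\kappa_{\max}/\kappa_{\min})\,\kappa_2(G)$.

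For uniformity, I would note that $\kappa_{\min}$ and $\kappa_{\max}$ are the coercivity and continuity constants of the continuous bilinear form and hence are independent of $h$ and $M$, while $\kappa_2(G)$ is uniformly bounded by hypothesis; their product is therefore a constant independent of $h$ and $M$. The only genuinely delicate point is the passage from the sandwich $\kappa_{\min}G\preceq A_{LL}\preceq\kappa_{\max}G$ to bounds on the \emph{standard} Euclidean eigenvalues of $A_{LL}$: one must apply Weyl monotonicity to the ordered eigenvalues rather than conflate the standard spectrum of $A_{LL}$ with the generalized spectrum of the pencil $(A_{LL},G)$, and it is precisely at this step that the conditioning of $G$ enters and controls the final constant. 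Everything else reduces to unwinding definitions.
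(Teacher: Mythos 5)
Your proposal is correct and follows essentially the same route as the paper's proof: the identification $c^{T}A_{LL}c=a(v_c,v_c)$, $c^{T}Gc=\|v_c\|_{H_0^1}^2$, the coercivity--continuity sandwich between scalar multiples of $G$, and the passage to extreme eigenvalues via the Rayleigh quotient (your Weyl-monotonicity step is the same argument), yielding $C_{\mathrm{cond}}=(\kappa_{\max}/\kappa_{\min})\,\kappa_2(G)$, which matches the paper's $(C_a/\alpha_a)\,\kappa_2(G)$ since in the energy norm the coercivity and continuity constants are exactly $\kappa_{\min}$ and $\kappa_{\max}$. Your explicit verification of symmetry and positive definiteness of $A_{LL}$ is a small addition the paper leaves implicit, but the substance of the argument is identical.
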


\begin{proof}
Let $\{\psi_i\}_{i=1}^K$ be a basis of $V_M^h$ and 
$A_{LL}\in\mathbb{R}^{K\times K}$ be defined by
$(A_{LL})_{ij}=a(\psi_j,\psi_i)$.  For any coefficient vector
$x=(x_1,\dots,x_K)^T\in\mathbb{R}^K$ define $v_M = \sum_{i=1}^K x_i \psi_i \in V_M^h.$
Then
$$
x^T A_{LL} x = a(v_M,v_M).
$$
By coercivity and continuity of $a$ on $V$,
$$
\alpha_a \|v_M\|_{H_0^1}^2
\le a(v_M,v_M)
\le C_a \|v_M\|_{H_0^1}^2
\qquad \forall v_M\in V_M^h.
$$
Let $G\in\mathbb{R}^{K\times K}$ be such that $G_{ij} = (\psi_j,\psi_i)_{H_0^1}$.  Then
$$
\|v_M\|_{H_0^1}^2 = x^T G x.
$$
Hence, for all $x\in\mathbb{R}^K$,
$$
\alpha_a\, x^T G x
\le x^T A_{LL} x
\le C_a\, x^T G x.
$$
Let $\pilambdait_{\min}$ and $\pilambdait_{\max}$ denote the smallest
and largest eigenvalues of a symmetric positive definite matrix.  The
Rayleigh quotient characterization implies
$$
\alpha_a\,\pilambdait_{\min}(G)
\le \pilambdait_{\min}(A_{LL})
\le \pilambdait_{\max}(A_{LL})
\le C_a\,\pilambdait_{\max}(G).
$$
Therefore
$$
\kappa_2(A_{LL})
=
\frac{\pilambdait_{\max}(A_{LL})}{\pilambdait_{\min}(A_{LL})}
\le
\frac{C_a}{\alpha_a}\,
\frac{\pilambdait_{\max}(G)}{\pilambdait_{\min}(G)}
=
\frac{C_a}{\alpha_a}\,\kappa_2(G).
$$
The Gram matrix $G$ depends only on the choice of basis in $V_M^h$; for any
uniformly stable choice of basis the condition number $\kappa_2(G)$
is bounded independently of $h$ and $M$.  Thus
$$
\kappa_2(A_{LL}) \le C_{\mathrm{cond}}
:= \frac{C_a}{\alpha_a}\,\sup_{h,M}\kappa_2(G),
$$
which is finite and independent of $N$ and $K$.  This proves the claim.
\end{proof}
\section{Complexity Analysis}\label{sec:complexity}
The spectral reduction procedure replaces the full discrete Galerkin system of
dimension $N=\dim(V_h)$ by a projected system of dimension
$K=\dim(V_M^h)=M^2$.  Its computational cost is governed by the formation of
the reduced operator $A_{LL}=B^T A B$, the reduced load vector $f_M=B^T F$,
and the solution of the resulting dense $K\times K$ system.  Since the
stiffness matrix $A$ in two dimensions contains $\mathcal{O}(N)$ nonzero
entries, multiplication of $A$ by the matrix $B\in\mathbb{R}^{N\times
K}$ costs $\mathcal{O}(N K)$ floating-point operations when $B$ is applied as
a dense matrix (as in the implementation reported here).  Left
multiplication by $B^T$ to assemble $A_{LL}$ and $f_M$ incurs an additional
$\mathcal{O}(N K)$ operations.  Thus the cost of forming the reduced system is
$\mathcal{O}(N K)$.

The matrix $A_{LL}$ is dense and of fixed size $K\times K$.  Solving the
system $A_{LL} z = f_M$ by Gaussian elimination or Cholesky factorization
requires $\mathcal{O}(K^3)$ operations.  Combining this with the assembly
cost yields the overall complexity
$$
\mathcal{C}_{\mathrm{RD}}
=
\mathcal{O}(N K) + \mathcal{O}(K^3).
$$
When $K$ is chosen independently of $N$—as is typical when $M$ is held
moderate—the term $\mathcal{O}(K^3)$ is constant and the cost of the entire
reduced solver is linear in $N$ (for fixed $K$). This should be compared with the cost of solving the full system
$A U = F$ by sparse Cholesky factorization.  For a quasi-uniform mesh in two
dimensions, the elimination graph exhibits fill-in on the order of
$N^{1/2}$, leading to a factorization cost of $\mathcal{O}(N^{3/2})$.  Since
$N^{3/2}$ grows strictly faster than $N$, the spectral reduced solver becomes
asymptotically more efficient than sparse direct methods whenever $K$ remains
bounded as the mesh is refined.  This difference in scaling is reflected in
the numerical experiments, where the reduced solver exhibits essentially
linear growth in runtime with the number of degrees of freedom for fixed $K$.
\section{Numerical Experiments}\label{sec:numerics}
This section reports numerical experiments designed to assess the accuracy and
performance of the spectral low-mode reduced method.  The goals
are to confirm the error estimates of Section~\ref{sec:error}, to examine the
effect of the spectral cutoff $M$ on the approximation, and to compare the
cost of the reduced solver with that of solving the full discrete system.  In this setting, the standard five-point finite difference discretization attains the same second-order
accuracy as conforming finite elements and avoids the additional
overhead associated with quadrature and element assembly.  The Cartesian grid also aligns naturally with the Laplace eigenmodes that span
the reduced space and permits direct evaluation at grid nodes and
simplifying the construction of the projected operator.  
\subsection{Example 1}
We consider \eqref{eq:modelPDE}--\eqref{eq:bc} with manufactured solution $u = \sin(\pi x)\sin(\pi y)$ and diffusion coefficient $\kappa = 1+\tfrac12\sin(2\pi x)\sin(2\pi y)$. The corresponding forcing term is obtained by substituting $u$ and $\kappa$
into \eqref{eq:modelPDE}. The reduced method is tested in terms of
accuracy and computational performance relative to the finite difference
discretization.
\subsubsection{Performance}
We first assess the accuracy and computational efficiency of the spectral
low--mode reduced solver relative to the finite difference (FD) discretization.
Table~\ref{tab:l2-spectral} reports the $L_2$ errors of both methods together
with the observed speed--up obtained by the reduced solver across a sequence
of grid resolutions.

\begin{table}[h!]
    \centering
    \begin{tabular}{cccc}
        \toprule
        & \multicolumn{2}{c}{$L_2$-Error} &  \\
        \cmidrule(lr){2-3}
        $N_x$ & FD & Reduced & Speed-up \\
        \midrule
         128  & $2.4872\times 10^{-5}$ & $2.4905\times 10^{-5}$ & $2.431$ \\
         256  & $6.2656\times 10^{-6}$ & $6.2740\times 10^{-6}$ & $3.044$ \\
         512  & $1.5724\times 10^{-6}$ & $1.5746\times 10^{-6}$ & $3.651$ \\
        1024  & $3.9387\times 10^{-7}$ & $3.9440\times 10^{-7}$ & $4.446$ \\
        2048  & $9.8564\times 10^{-8}$ & $9.8697\times 10^{-8}$ & $4.932$ \\
        \bottomrule
    \end{tabular}
    \medskip
    \caption{$L_2$ errors and computational speed--up for the finite difference
    and spectral reduced solvers.}
    \label{tab:l2-spectral}
\end{table}
\vspace*{-0.2in}
The errors produced by the two approaches agree to within a few parts in
$10^3$ for all grid sizes, indicating that the reduction error introduced by
the spectral projection is negligible compared to the underlying discretization
error. This observation is consistent with the error estimates of
Section~\ref{sec:error}, which predict that the contribution of discarded
high--frequency modes is asymptotically small. The convergence behavior is further illustrated in Figure~\ref{fig:Convergence}, which displays the $L_2$ error as a function of grid spacing.

\begin{figure}[h!]
\centering
\includegraphics[width=0.46\linewidth]{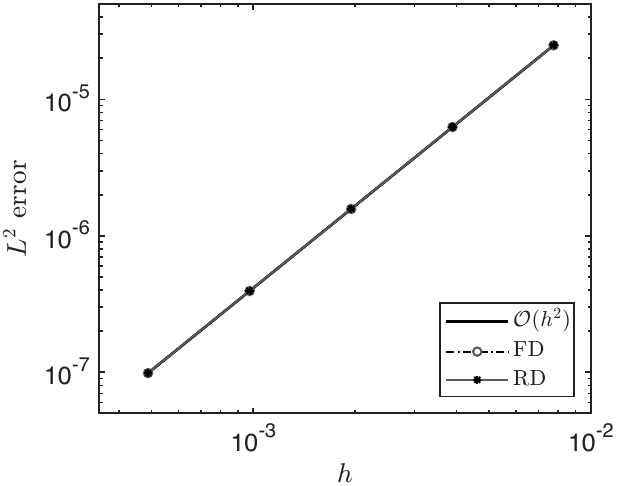}\\[-3ex]
\caption{$L^2$--error convergence for the finite difference and spectral reduced solvers.}
\label{fig:Convergence}
\end{figure}

Both methods exhibit second--order convergence, matching the accuracy of the
underlying finite difference scheme. From a computational standpoint, the
reduced solver achieves speed--up factors ranging from approximately $2.4$ to
$5$ as the grid is refined. This gain reflects the essentially linear complexity
of the projection--based reduced solve, in contrast to the superlinear scaling
of the sparse direct solver applied to the full system.

\subsubsection{Convergence}
While the results above demonstrate that a fixed spectral cutoff reproduces the
accuracy of the full discretization, it is also of interest to examine the
dependence of the reduced approximation on the dimension of the spectral space.
With the projection estimate $C\,\sqrt{\log M}/M$ in mind, we next study the effect of varying the cutoff $M$. To isolate this effect from discretization errors, we fix a fine grid with $m=1024$ and report the $L_2$ error as a function of $K$.

\begin{figure}[h!]
\centering
\includegraphics[width=0.46\linewidth]{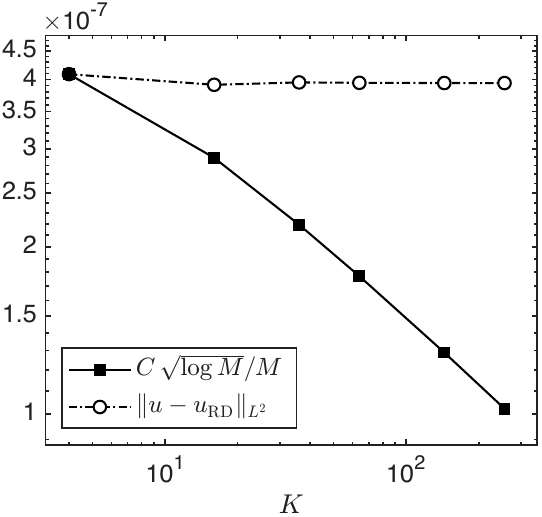}\\[-1.5ex]
\caption{$L_2$ error of the reduced solution versus spectral dimension $K$ on the fixed grid $m=1024$.}
\label{fig:SpectralDecayTrue}
\end{figure}
In the reference curve $C\,\sqrt{\log M}/M$, the constant $C$ is chosen by
normalizing the bound at the smallest spectral cutoff $M_1$ used in the
experiment, namely
$$
C = \frac{\|u - u_{\mathrm{RD}}^{(M_1)}\|_{L^2}}
{\sqrt{\log M_1}/M_1}.
$$
This choice removes the unknown prefactor in the asymptotic estimate and
allows the comparison to emphasize the predicted decay rate with respect to
the spectral cutoff $M$. From the theoretical estimate of Theorem~\ref{thm:spectral_decay_error}, the
constant $C$ is proportional to $\|u\|_{H^2(\Omega)}$, but since this quantity
is not directly available in discrete computations, the normalization above
provides a practical surrogate for visualizing the asymptotic behavior.

\par As shown in Figure~\ref{fig:SpectralDecayTrue}, the $L_2$ error decreases
slightly as $M$ increases from $2$ to $4$, and then remains essentially
unchanged for larger values of $M$.  This saturation indicates that for
$M\ge4$ the projection error is already dominated by the underlying
finite difference error.  Consequently, additional spectral modes do not yield
any noticeable improvement in accuracy.  In particular, choices such as
$M=8$ or $M=16$ provide full fine-grid accuracy while retaining a reduced space
dimension $K\ll N$. To separate the influence of the spectral cutoff from the
discretization error more clearly, we also compute the reduction error
$\|u_{\mathrm{FD}} - u_{\mathrm{RD}}\|_{L^2}$ for varying values of $M$.

\begin{figure}[h!]
\centering
\includegraphics[width=0.46\linewidth]{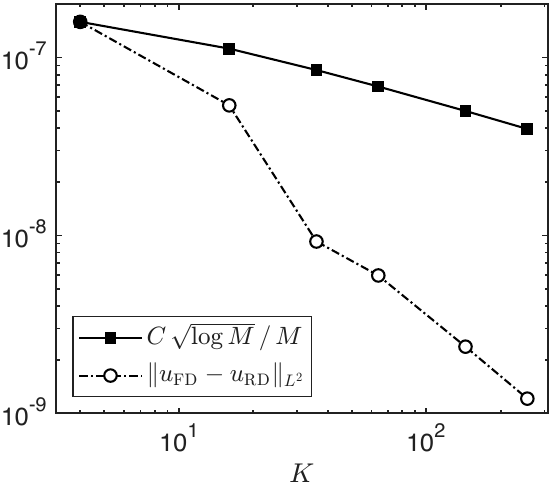}\\[-1.5ex]
\caption{Reduction error $\|u_{\mathrm{FD}} - u_{\mathrm{RD}}\|_{L^2}$ versus spectral cutoff $M$ on the fixed grid $m=1024$.}
\label{fig:SpectralDecayFD}
\end{figure}

Figure~\ref{fig:SpectralDecayFD} shows a rapid decay of the reduction error as
additional low-frequency modes are included.  Increasing the cutoff from
$M=2$ to $M=4$ reduces the error by nearly an order of magnitude, and the error
continues to decrease steadily for larger values of $M$.  Already at $M=8$,
the reduction error falls below $10^{-8}$, while further increases in $M$
produce only modest improvements.  This behavior is consistent with the
diminishing returns predicted by the estimate $\sqrt{\log M}/M$ and confirms
that a very small spectral subspace suffices to recover the full finite difference
solution on a fine grid.
\subsection{Example 2}
We consider \eqref{eq:modelPDE}--\eqref{eq:bc} with manufactured solution $u(x)=\sin(\pi x)\sin(\pi y)$, and the highly oscillatory coefficient $\kappa = 1+0.9\sin(8\pi x)\sin(8\pi y).$
\subsubsection{Robustness}
Table~\ref{tab:l2-oscillatory} reports the $L_2$ errors and computational
speed-up for both the full FD solution and the Reduced solution on grids ranging from $128^2$ to $2048^2$ interior points.

\begin{table}[h!]
    \centering
    \begin{tabular}{cccc}
        \toprule
        & \multicolumn{2}{c}{$L_2$-Error} &  \\
        \cmidrule(lr){2-3}
        $N_x$ & FD & Reduced & Speed-up \\
        \midrule
         128  & $3.6863\times 10^{-4}$ & $1.3466\times 10^{-4}$ & $2.289$ \\
         256  & $9.3492\times 10^{-5}$ & $3.3966\times 10^{-5}$ & $3.250$ \\
         512  & $2.3504\times 10^{-5}$ & $8.5269\times 10^{-6}$ & $3.546$ \\
        1024  & $5.8898\times 10^{-6}$ & $2.1360\times 10^{-6}$ & $4.097$ \\
        2048  & $1.4741\times 10^{-6}$ & $5.3454\times 10^{-7}$ & $4.499$ \\
        \bottomrule
    \end{tabular}
    \smallskip
    \caption{$L_2$ errors and speed-up for the spectral reduced solver with oscillatory diffusion coefficient.}
    \label{tab:l2-oscillatory}
\end{table}
\vspace*{-0.1in}
Although both methods exhibit the same second--order convergence rate, the
reduced approximation consistently attains a smaller error constant.  This
behavior is expected, since the reduced solution is obtained from a global
Galerkin projection onto a smooth spectral subspace, which can reduce the
error constant relative to a local finite difference discretization without
altering the asymptotic order. The accuracy of the reduced approximation is maintained even though the
spectral space dimension is fixed ($K=64$), indicating that the dominant
solution components are captured by a small number of global modes.  These
results confirm that the method remains effective for rapidly varying
coefficients and does not depend on slow spatial variation of the diffusion
field.

\subsubsection{Comparison with Direct, Multigrid, and Deflation Methods}
To complement the accuracy of the results above and place the reduced solver in a
broader context, we examine its computational performance
relative to standard solvers commonly used for elliptic problems. In
particular, we compare wall-clock runtimes against a sparse direct solver, a
geometric multigrid (GMG)–preconditioned Krylov method, and a deflation-based
Krylov method. This comparison highlights differences in scalability,
robustness, and setup cost that are not visible from error metrics alone. Table~\ref{tab:solver-comparison} shows that the spectral reduced solver
exhibits consistently favorable performance across all tested grid sizes. While the sparse direct solver is competitive on the coarsest grids, its
runtime increases rapidly as the number of unknowns grows, reflecting the
expected superlinear complexity of sparse factorization in two dimensions.

\begin{table}[h!]
\centering
\begin{tabular}{ccccccc}
\toprule
$m$ & $N=m^2$ & Direct & Reduced & GMG & GMG it. & Defl.\ it. \\
\midrule
  32  &   1024  & 0.001 & 0.002 & 0.091 & 281 & 9  \\
  64  &   4096  & 0.004 & 0.003 & 0.058 & 188 & 15 \\
 128  &  16384  & 0.011 & 0.006 & 0.204 & 241 & 19 \\
 256  &  65536  & 0.045 & 0.020 & 0.961 & 239 & 32 \\
\bottomrule
\end{tabular}
\medskip
\caption{Runtimes (in seconds) for the sparse direct solver, the
spectral low-mode reduced (Reduced) solver with fixed $M=8$, and a geometric
multigrid (GMG)–preconditioned Krylov solver.}
\label{tab:solver-comparison}
\end{table}
\vspace*{-0.15in}
In contrast, the reduced solver shows only mild growth in runtime, consistent
with the linear complexity predicted by the projection-based formulation for
fixed reduced dimension. The multigrid-preconditioned Krylov solver converges in all cases but requires
a large and nonuniform number of iterations, resulting in significantly higher
runtimes. This behavior illustrates the sensitivity of multigrid performance
to coefficient variability and smoother effectiveness. Deflation reduces
iteration counts relative to GMG but incurs additional overhead associated
with constructing the deflation space from approximate eigenvectors of the
heterogeneous operator. Overall, these results indicate that, in the experiments
reported here (for this oscillatory coefficient and with the GMG/deflation
implementations and parameters used), the proposed spectral low-mode reduced
solver provides a competitive and robust alternative to both direct and
iterative methods.
\subsubsection{Conditioning sensitivity}
To complement the accuracy results for highly oscillatory coefficients, we
examine the numerical conditioning of the reduced operator
$\mathcal{A}=B^{T}AB$ on a fixed grid with $m = 256$.  The objective is to assess whether highly oscillatory coefficient or the choice of discrete projection $\Pi_h$ affect
the stability of the reduced system in practice. Two constructions of the discrete spectral basis are compared: direct nodal
interpolation of the Laplacian eigenfunctions into $V_h$ (denoted
\emph{interp}), and a mass-based projection surrogate (denoted \emph{proj}).
In both cases, the resulting basis is mass-orthonormalized so that
$B^{T}MB=I$ to eliminate scaling effects and isolating the intrinsic
conditioning of $\mathcal{A}$.

\begin{table}[h!]
\centering
\begin{tabular}{ccccc}
\toprule
$M$ & $K$ &
$\kappa(\mathcal{A})$ (interp) &
$\kappa(\mathcal{A})$ (proj) &
$L_2$ error \\
\midrule
 2  &   4   & $4.0005\times 10^{0}$ & $4.0005\times 10^{0}$ & $6.2655\times 10^{-6}$ \\
 4  &  16   & $1.6209\times 10^{1}$ & $1.6209\times 10^{1}$ & $6.5030\times 10^{-6}$ \\
 6  &  36   & $3.6540\times 10^{1}$ & $3.6540\times 10^{1}$ & $1.3553\times 10^{-5}$ \\
 8  &  64   & $6.6828\times 10^{1}$ & $6.6828\times 10^{1}$ & $3.3947\times 10^{-5}$ \\
12  & 144   & $1.5963\times 10^{2}$ & $1.5963\times 10^{2}$ & $7.4605\times 10^{-5}$ \\
16  & 256   & $2.9043\times 10^{2}$ & $2.9043\times 10^{2}$ & $8.0390\times 10^{-5}$ \\
\bottomrule
\end{tabular}
\medskip
\caption{Condition numbers of $\mathcal{A}$ and
$L_2$ errors on a fixed grid $m = 256$.}
\label{tab:cond-oscillatory}
\end{table}
\vspace*{-0.15in}
The results in Table~\ref{tab:cond-oscillatory} show that $\kappa(\mathcal{A})$ exhibits a moderate, monotone growth as the reduced
dimension $K$ increases.  This behavior is expected, since enlarging the
spectral subspace introduces higher--frequency components into the reduced
operator.  Importantly, the conditioning remains entirely insensitive to the
underlying mesh resolution and is essentially identical for the two discrete
projection surrogates once mass--orthonormalization is applied.  This confirms
the theoretical result that the conditioning of the reduced operator is
governed by the bilinear form and basis normalization, rather than by the
finite element discretization or the particular realization of $\Pi_h$.
\section{Conclusions}
This work has introduced a spectral low--mode reduced method for the efficient
solution of variable--coefficient elliptic boundary value problems. The method
is based on projecting a standard finite difference or finite element
discretization onto a global coarse space spanned by low Dirichlet Laplacian
eigenmodes. Unlike classical spectral discretizations, the differential
operator is assembled entirely in physical space, and the spectral basis is
used solely as an analytic compression mechanism. As a result, the full
heterogeneity of the diffusion coefficient is retained exactly through the
projected operator, while the dimensionality of the resulting linear system is
reduced by several orders of magnitude.

Theoretical analysis shows that the reduced solution is the energy--optimal
Galerkin approximation in the chosen spectral subspace and that the associated
error is governed by the decay of Laplacian eigen--coefficients. Under standard
elliptic regularity assumptions, the contribution of discarded high modes is
shown to decay at a rate proportional to $\sqrt{\log M}/M$, and for uniformly
stable reduced bases the projected operator is well conditioned with bounds
independent of mesh refinement. A Schur complement interpretation further
clarifies the effect of neglecting high--frequency interactions and provides
explicit bounds on the truncation error in terms of coefficient regularity.

Numerical experiments confirm the theoretical predictions. For smooth
and highly oscillatory diffusion coefficients alike, the reduced solver
reproduces the second--order accuracy of the underlying finite difference
scheme while maintaining a fixed reduced dimension. In all tested cases, a
small number of spectral modes is sufficient to recover full fine--grid
accuracy, and increasing the cutoff beyond this point yields diminishing
returns, consistent with the analytic error estimates. Runtime comparisons
demonstrate that the reduced solver achieves substantial speedups over sparse
direct solvers and, in the experiments reported (for the oscillatory
coefficient and with the GMG and deflation implementations and parameters
used), exhibits favorable performance relative to multigrid and deflation--based Krylov
methods.

The results indicate that global Laplacian eigenmodes provide an effective and
compact coarse space for elliptic problems with heterogeneous
coefficients. By avoiding iterative tuning, problem--dependent coarse space
construction, and eigenvector approximation of the full operator, the proposed
approach offers a conceptually simple and powerful solver for elliptic boundary-value problems.

\bibliographystyle{siamplain}
\bibliography{references}
\end{document}